\newtheorem{lemma}{Lemma}[section]
\newtheorem{proposition}{Proposition}[section]
\newtheorem{theorem}{Theorem}[section]
\newtheorem{corollary}{Corollary}[section]
\newtheorem{remark}{Remark}[section]
\theoremstyle{definition}
\newtheorem{Properties}{Properties}[section]
\theoremstyle{remark}
\title{\huge{Generators of the $5$-class group of fields of degree 20}}
\author{ ABDELMALEK AZIZI\and  FOUAD ELMOUHIB \and MOHAMED TALBI}
\date{}
\begin{document}

\maketitle

\begin{center}
{\sc Abdelmalek AZIZI }\\
{\footnotesize Department of Mathematics and Computer Sciences,\\
Mohammed First University, Oujda, Morocco,\\
abdelmalekazizi@yahoo.fr}\\
\vspace{0.7cm}
{\sc Fouad ELMOUHIB }\\
{\footnotesize Department of Mathematics and Computer Sciences,\\
Mohammed First University, Oujda, Morocco,\\
Correspondence: fouad.cd@gmail.com}\\
\vspace{0.7cm}
{\sc Mohamed TALBI }\\
{\footnotesize Regional Center of Professions of Education and Training, Oujda, Morocco,\\
ksirat1971@gmail.com}\\

\end{center}

\begin{abstract}
Let $\Gamma  =  \mathbb{Q}(\sqrt[5]{n})$ be a pure quintic field, where $n$ is a natural number $5^{th}$ power-free, $k_0 = \mathbb{Q}(\zeta_5)$ be the cyclotomic field containing a primitive $5^{th}$ root of unity $\zeta_5$ and $k = \Gamma(\zeta_5)$ be the normal closure of $\Gamma$. Let $C_{k,5}$ be the $5$-component of the class group of k. The purpose of this paper is to write down the generators of $C_{k,5}$, whenever it is of type $(5,5)$ and the rank of the group of ambiguous classes under the action of $\mathrm{Gal}(k/k_0)$ is $1$.
\end{abstract}

\section{Introduction}\label{intro}
Let $\Gamma  =  \mathbb{Q}(\sqrt[5]{n})$ be a pure quintic field, where $n$ is a natural number $5^{th}$ power-free, $k_0 = \mathbb{Q}(\zeta_5)$ the cyclotomic field containing a primitve $5^{th}$ root  of unity, then $k = \Gamma(\zeta_5)$ is the normal closure of $\Gamma$. Let $C_{k,5}$ be the $5$-class group of $k$ and $C_{k,5}^{(\sigma)}$ the subgroup of ambiguous ideal classes under the action of $\mathrm{Gal}(k/k_0)\, =\,\langle \sigma\rangle$. In \cite{FOU} we showed that there are six possible shapes of the radicand $n$ for which it is possible that $C_{k,5}$ is of type $(5,5)$ and $C_{k,5}^{(\sigma)} \simeq \mathbb{Z}/5\mathbb{Z}$. Based on some exhaustive numerical
calculus realized by the system PARI/GP \cite{PRI}, we conjectured that, among the six possible forms of $n$, only for three of them one can have $C_{k,5}$ of type $(5,5)$ and $C_{k,5}^{(\sigma)} \simeq \mathbb{Z}/5\mathbb{Z}$ as follows:\\

$(1)$ $n = 5^{e}p\,\not\equiv\,\pm1,\pm7\,(\mathrm{mod}\,25)$ such that $p\,\not\equiv\,-1\,(\mathrm{mod}\,25)$.\\

$(2)$ $n = p^{e_1}q\equiv\pm1,\pm7\,(\mathrm{mod}\,25)$ such that $p\,\not\equiv\,-1\,(\mathrm{mod}\,25)$, $q\,\not\equiv\,\pm7\,(\mathrm{mod}\,25)$.\\

$(3)$ $n = p^{e_1}\equiv\pm1,\pm7\,(\mathrm{mod}\,25)$ such that $p\equiv-1\,(\mathrm{mod}\,25)$.\\
Here $p$ and $q$ are primes such that $p\equiv-1\,(\mathrm{mod}\,5)$, $q\equiv\pm2\,(\mathrm{mod}\,5)$, $0 \leq e \leq 4$ and $1 \leq e_1 \leq 4$.\\
In this paper, we will prove that $C_{k,5}$ is of type $(5,5)$ if and only if $5$ divides exactly the $5$-class number of $\Gamma$ and $u = 5^3$, where $u$ is the index of the subgroup $E_0$ generated by the units of intermediate fields of the extension $k/\mathbb{Q}$ in the unit group of $k$. Next we determine generators of $C_{k,5}$, when $C_{k,5}$ is of type $(5,5)$ and rank $C_{k,5}^{(\sigma)} = 1$, for each of the three forms of $n$ given above.\\
Since $k$ is a Kummer extension of $k_0$, we begin with the decomposition laws in Kummer extensions, which is useful to give the prime factorization in the normal closure $k$. As the proof of our main results is established by the norm residue symbol, we recall the definition and the most important properties of this concept. Making use of this, we can determine generators of the $5$-class group $C_{k,5}$ of type $(5,5)$ if rank $C_{k,5}^{(\sigma)} = 1$. Our main result will be  underpinned by numerical examples, using the PARI/GP [\cite{PRI}] in section 7. In fact, we shall prove the following Main Theorem:
\begin{theorem}
\label{thp}
Let $k = \mathbb{Q}(\sqrt[5]{n},\zeta_5)$, where $n$ is a natural number $5^{th}$ power-free, be the normal closure of the pure quintic field $\Gamma = \mathbb{Q}(\sqrt[5]{n})$. Let $\mathrm{Gal}(k/\Gamma) = \langle\tau\rangle$. Let $p$,$q$ and $l$ be primes such that, $p\equiv-1\, (\mathrm{mod}\, 5)$, $q\equiv\pm2\, (\mathrm{mod}\, 5)$ and $l \neq p$, $l \neq q$. Assume that $C_{k,5}$ is of type $(5,5)$ and rank$(C_{k,5}^{(\sigma)}) = 1$, then we have:
\begin{itemize}
\item[$(1)$] If $n = 5^{e_1}p^{e_2}\,\not\equiv\,\pm1,\pm7\,(\mathrm{mod}\,25)$, with $e\in\{1,2,3,4\}$ and $p\,\not\equiv\,-1\, (\mathrm{mod}\, 25)$. The prime $p$ decomposes in $k$ as $p\mathcal{O}_k = \mathcal{P}^5_1\mathcal{P}^5_2$, where $\mathcal{P}_1$ and $\mathcal{P}_2$ are prime ideals of $k$. Let $\mathcal{L}$ be a prime ideal of $k$ above $l$. If $5$ and $l$ are not quintic residues modulo $p$, then the $5$-class group $C_{k,5}$ is generated by classes $[\mathcal{P}_1]$ and $[\mathcal{L}]^{1-\tau^2}$ and we have:
\begin{center}
$C_{k,5}$ = $\langle[\mathcal{P}_1]\rangle\times\langle[\mathcal{L}]^{1-\tau^2}\rangle$ = $\langle[\mathcal{P}_1],[\mathcal{L}]^{1-\tau^2}\rangle$ 
\end{center}

\item[$(2)$] If $n = p^{e_1}q^{e_2}\equiv\pm1,\pm7\,(\mathrm{mod}\,25)$, with $e\in\{1,2,3,4\}$ and $p\,\not\equiv\,-1\, (\mathrm{mod}\, 25)$, $q\,\not\equiv\,\pm7\, (\mathrm{mod}\, 25)$. The prime $p$ decomposes in $k$ as $p\mathcal{O}_k = \mathcal{P}^5_1\mathcal{P}^5_2$, where $\mathcal{P}_1$ and $\mathcal{P}_2$ are prime ideals of $k$. Let $\mathcal{L}$ be a prime ideal of $k$ above $l$. If $q$ and $l$ are not a quintic residues modulo $p$, then the $5$-class group $C_{k,5}$ is generated by classes $[\mathcal{P}_1]$ and $[\mathcal{L}]^{1-\tau^2}$ and we have:
\begin{center}
$C_{k,5}$ = $\langle[\mathcal{P}_1]\rangle\times\langle[\mathcal{L}]^{1-\tau^2}\rangle$ = $\langle[\mathcal{P}_1],[\mathcal{L}]^{1-\tau^2}\rangle$ 
\end{center}

\item[$(3)$] If $n = p^e\equiv\pm1,\pm7\,(\mathrm{mod}\,25)$, with $e\in\{1,2,3,4\}$ and $p\equiv-1\, (\mathrm{mod}\, 25)$. $5$ decomposes in $k$ as $5\mathcal{O}_k = \mathcal{B}^4_1\mathcal{B}^4_2\mathcal{B}^4_3\mathcal{B}^4_4\mathcal{B}^4_5$, where $\mathcal{B}_i$  are prime ideals of $k$. If $5$ is not a quintic residue modulo $p$, then the $5$-class group $C_{k,5}$ is generated by classes $[\mathcal{B}_i]$ and $[\mathcal{B}_j]$, $i\,\neq\,j\in\{1,2,3,4,5\}$ and we have:
\begin{center}
$C_{k,5}$ = $\langle[\mathcal{B}_i]\rangle\times\langle[\mathcal{B}_j]\rangle$ = $\langle[\mathcal{B}_i],[\mathcal{B}_j]\rangle$ 
\end{center}

\end{itemize}
\end{theorem}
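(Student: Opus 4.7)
The plan is to handle the three cases of the theorem in parallel, following the same three-step strategy. Since we already know $|C_{k,5}| = 25$ with $C_{k,5} \cong (\mathbb{Z}/5\mathbb{Z})^2$, it suffices to exhibit two $\mathbb{F}_5$-linearly independent elements of order $5$. I would first verify the asserted factorizations of $p$ (cases (1), (2)) and of $5$ (case (3)) in $\mathcal{O}_k$ by applying the Kummer decomposition laws for $k/k_0$ recalled in the preliminaries, combined with the ramification of $p$ and $5$ in $k_0/\mathbb{Q}$ and in $\Gamma/\mathbb{Q}$. In cases (1), (2) the hypotheses $p \equiv -1 \pmod 5$ and $p \not\equiv -1 \pmod{25}$ together with the prescribed shape of $n$ force $p$ to ramify with index $5$ in $\Gamma$ while behaving in a controlled way in $k_0$, yielding $p\mathcal{O}_k = \mathcal{P}_1^5 \mathcal{P}_2^5$. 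In case (3), the condition $p \equiv -1 \pmod{25}$ alters the local behaviour at $5$, and the Kummer law applied to the prime $(1-\zeta_5)$ of $k_0$ produces the splitting $5\mathcal{O}_k = \mathcal{B}_1^4 \cdots \mathcal{B}_5^4$.

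Next I would show, via the $5$th-power norm residue symbol, that each proposed generator is a class of order exactly $5$. Since $\mathcal{P}_1^5 \mathcal{P}_2^5 = (p)$, the class $[\mathcal{P}_1]$ is $5$-torsion, so the real question is whether it is already principal. By the Hasse norm theorem applied to the Kummer extension $k = k_0(\sqrt[5]{n})/k_0$, non-principality of $\mathcal{P}_1$ reduces to the nontriviality of certain symbols $\left(\frac{n,\beta}{\mathfrak{p}}\right)_5$, and the hypothesis that $5$ (resp.\ $q$) is not a quintic residue modulo $p$ forces one such symbol to be nontrivial. A parallel product-formula computation with $\beta = l$ shows that $[\mathcal{L}]^{1-\tau^2}$ is nontrivial of order $5$ in cases (1), (2); an analogous argument, using the residue symbol at the primes above $5$, handles $[\mathcal{B}_i]$ and $[\mathcal{B}_j]$ in case (3).

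Finally, I would leverage the hypothesis $\mathrm{rank}(C_{k,5}^{(\sigma)}) = 1$ to get independence. In cases (1), (2), the prime $\mathcal{P}_1$ is $\sigma$-invariant (being ramified in $k/k_0$), so $[\mathcal{P}_1]$ already generates $C_{k,5}^{(\sigma)}$; it then remains to show $[\mathcal{L}]^{1-\tau^2}$ is not $\sigma$-invariant, by computing $(1-\sigma)[\mathcal{L}]^{1-\tau^2}$ via the commutation relation $\tau\sigma\tau^{-1} = \sigma^a$ in $\mathrm{Gal}(k/\mathbb{Q})$ (where $a$ is the power by which $\tau$ acts on $\zeta_5$) and invoking the nontriviality obtained in the previous step. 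This places $[\mathcal{L}]^{1-\tau^2}$ outside $\langle [\mathcal{P}_1]\rangle$, and the decomposition $C_{k,5} = \langle [\mathcal{P}_1]\rangle \times \langle [\mathcal{L}]^{1-\tau^2}\rangle$ follows from a cardinality count. In case (3) both candidates lie above $5$, and I would argue instead that $[\mathcal{B}_i]$ generates the ambiguous part while $[\mathcal{B}_j]$ represents a class transverse to it, using the relation $\prod_s [\mathcal{B}_s] = [(1-\zeta_5)]$ in $C_{k,5}$ together with the quintic non-residue condition on $5$ modulo $p$.

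The main obstacle, and where I expect most of the technical work to lie, is the independence check in Step 3 for cases (1) and (2): translating the arithmetic hypothesis \emph{``$l$ is not a quintic residue modulo $p$''} into the group-theoretic statement that $[\mathcal{L}]^{1-\tau^2}$ escapes the ambiguous subgroup. This demands careful bookkeeping of the Galois action on the norm residue symbols and a precise use of the product formula, ensuring that the local contributions at primes above $p$ and $l$ do not accidentally cancel; any sloppy accounting would leave open the possibility that $[\mathcal{L}]^{1-\tau^2} \in \langle [\mathcal{P}_1]\rangle$, collapsing the purported direct-product decomposition.
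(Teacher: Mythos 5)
Your overall strategy does track the paper's: verify the factorizations with the Kummer decomposition laws, use quintic norm residue symbols to show the candidate generators are non\-principal, and then assemble the direct product. However, there is a genuine gap at the centre of your Step 2. From $\mathcal{P}_1=\beta\mathcal{O}_k$ you may only conclude $\pi_1\mathcal{O}_{k_0}=\mathcal{N}_{k/k_0}(\beta)\mathcal{O}_{k_0}$, that is $\pi_1=\epsilon\,\mathcal{N}_{k/k_0}(\beta)$ for some unknown unit $\epsilon\in E_{k_0}$. A nontrivial symbol $\left(\frac{\pi_1,\,n}{\pi_1}\right)_5$ shows only that $\pi_1$ itself is not a norm from $k$; it does not exclude that $\epsilon^{-1}\pi_1$ is a norm, and hence does not contradict principality of $\mathcal{P}_1$. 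The paper closes exactly this hole by invoking the containment $E_{k_0}\subseteq\mathcal{N}_{k/k_0}(k^{*})$, justified by [Kulkarni--Majumdar--Sury, Thm.~5.15] with $q^{*}=1$ (resp.\ $q^{*}=2$ in case (3)); this is also precisely where the hypothesis $\operatorname{rank}C_{k,5}^{(\sigma)}=1$ feeds into the argument, via the ambiguous class number formula. Your proposal never addresses this unit obstruction, so the contradiction you want in Step 2 does not yet follow.

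On your Step 3, which you single out as the main technical obstacle: the paper disposes of independence far more cheaply than your proposed $\sigma$-computation. It splits $C_{k,5}\cong C_{k,5}^{+}\times C_{k,5}^{-}$ into $\tau^{2}$-eigenspaces, each cyclic of order $5$ by Parry's class number formula and $C_{k,5}^{+}\cong C_{\Gamma,5}$. Since $\mathcal{P}_1^{\tau^{2}}=\mathcal{P}_1$, the class $[\mathcal{P}_1]$ lies in $C_{k,5}^{+}$, while $[\mathcal{L}]^{1-\tau^{2}}$ lies in $C_{k,5}^{-}$ by construction; once both are nontrivial, independence and the direct product decomposition are automatic, with no need to compute the $\sigma$-action on a class (which you could not control from the ideal alone). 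Finally, in case (3) the relation you propose, $\prod_{s}[\mathcal{B}_s]=[(1-\zeta_5)\mathcal{O}_k]$, gives the trivial class, since $1-\zeta_5\in k_0\subset k$, so it cannot separate $[\mathcal{B}_i]$ from $[\mathcal{B}_j]$; the paper instead tests principality of $\mathcal{B}_i^{a_1}\mathcal{B}_j^{a_2}$ directly with the same norm residue computation.
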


\begin{center}
$\textbf{Notations. \ }$
\end{center}
Throughout this paper, we use the following notations:
\begin{itemize}
 
 \item The lower case letters $p$,$q$ and $l$ will denote prime numbers such that, $p\equiv-1\, (\mathrm{mod}\, 5)$, $q\equiv\pm2\, (\mathrm{mod}\, 5)$ and $l\,\neq\,p$, $l\,\neq\,q$ .
 
 \item $\Gamma = \mathbb{Q}(\sqrt[5]{n})$: a pure quintic field, where $n\neq1$ is a $5^{th}$ power-free natural number.
 
 \item $k_0 = \mathbb{Q}(\zeta_5)$: the cyclotomic field, where $\zeta_5 = e^{\frac{2i\pi}{5}}$ a primitive $5^{th}$ root of unity.
 
 \item $k = \mathbb{Q}(\sqrt[5]{n},\zeta_5)$: the normal closure of $\Gamma$, a quintic Kummer extension of $k_0$.
 
 \item $\langle\tau\rangle = \operatorname{Gal}(k/\Gamma)$ such that $\tau$ is identity on $\Gamma$, and sends $\zeta_5$ to its square. Hence $\tau$ has order 4.
 
 \item $\langle\sigma\rangle = \operatorname{Gal}(k/k_0)$ such that $\sigma$ is identity on $k_0$, and sends $\sqrt[5]{n}$ to $\zeta_5\sqrt[5]{n}$. Hence $\sigma$ has order 5.

\item $u$: the index of subgroup $E_0$ generated by the units of intermediate fields of the extension\\\\ $k/\mathbb{Q}$ in the unit group of $k$.

 \item $\lambda = 1-\zeta_5$ is prime element above $5$ in $k_0$.
 
 \item $q^{\ast} = 2,\,1$ or $0$, according to whether $\zeta_{5}$ and $1+\zeta_5$ are both, or only one of them, or none is norm or is norm of an element of $k^* = k\setminus\lbrace 0\rbrace$.
 
 \item For a number field $L$, denote by:
  \begin{itemize}
   \item $\mathcal{O}_{L}$: the ring of integers of $L$;
   \item $h_{L}$: the class number of $L$;
   \item $C_{L,5}$: the $5$-class group of $L$;
   \item $[\mathcal{I}]$ : the class of a fractional ideal $\mathcal{I}$ in the class group of L.
  \end{itemize}
  
\end{itemize}



\section{\Large Decomposition laws in Kummer extensions}
\label{deckumm}
Since the $5$-extensions of $k$ and $k_0$ are all Kummer extensions, we recall the decomposition laws of ideals in these extensions. Let $L$ be a number field which contains the $l^{th}$ roots of unity, where $l$ is prime, and $\theta$ be an element of $L$, such that $\theta\,\neq\,\mu^l$, for all $\mu \in L$, therefore $L(\sqrt[l]{\theta})$ is cyclic extension of degree $l$ over $L$. We note by $\zeta$ a $l^{th}$ primitive root of unity. 
\begin{proposition}.\\\label{prKummer}
$(1)$ We assume that a prime $\mathcal{P}$ of $L$ divides $\theta$ exactly to the power $\mathcal{P}^a$.
\begin{itemize}
\item[-] If $a=0$ and $\mathcal{P}$ does not divide $l$, then $\mathcal{P}$ splits completely in $L(\sqrt[l]{\theta})$ when the congruence\\ $\theta\equiv X^l\,(\mathrm{mod}\,\mathcal{P})$ has a solution in $L$.
\item[-] If $a=0$ and $\mathcal{P}$ does not divide $l$, then $\mathcal{P}$ is inert in $L(\sqrt[l]{\theta})$ when the congruence $\theta\equiv X^l\,(\mathrm{mod}\,\mathcal{P})$ has no solution in $L$.
\item[-] If $l\nmid a$, then $\mathcal{P}$ is totally ramified in $L(\sqrt[l]{\theta})$.
\end{itemize}
$(2)$ Let $\mathcal{B}$ a prime factor of $1-\zeta$ that divides $1-\zeta$ exactly to the a$^{th}$ power. Suppose that $\mathcal{B}\nmid\theta$, then $\mathcal{B}$ splits completely in $L(\sqrt[l]{\theta})$ if the congruence
\begin{center}
$\theta\equiv X^l\,(\mathrm{mod}\,\mathcal{B}^{al+1})$ \hspace{2cm} $(*)$ 
\end{center} 
has a solution in $L$. The ideal $\mathcal{B}$ is inert in $L(\sqrt[l]{\theta})$ if the congruence
\begin{center}
$\theta\equiv X^l\,(\mathrm{mod}\,\mathcal{B}^{al})$ 
\hspace{2cm} $(**)$
\end{center}
has a solution in $L$, but $(*)$ has none. The ideal $\mathcal{B}$ is totally ramified in $L$ if the congruence $(**)$ has no solution.
\end{proposition}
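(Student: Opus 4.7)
The plan is to reduce to a local statement by passing to the completion at the prime in question, since splitting, inertia, and ramification are all local properties. Because $[L(\sqrt[l]{\theta}):L]=l$ is prime, every prime of $L$ must either split completely, be inert, or be totally ramified in the extension, so in each regime it suffices to identify which of the three possibilities occurs.

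For part (1), first treat the case $a=0$ with $\mathcal{P}\nmid l$. Then $\theta$ is a local unit and the polynomial $X^l-\theta$ has discriminant $\pm l^l\theta^{l-1}$, a unit, so $L_\mathcal{P}(\sqrt[l]{\theta})/L_\mathcal{P}$ is unramified. The extension is trivial precisely when $\bar\theta\in\kappa(\mathcal{P})^{*l}$, i.e.\ when the congruence $\theta\equiv X^l\pmod{\mathcal{P}}$ is solvable, giving complete splitting; otherwise the residue extension has full degree $l$ and $\mathcal{P}$ is inert. The presence of $\zeta\in L$ enters here, forcing $\kappa(\mathcal{P})^*/\kappa(\mathcal{P})^{*l}$ to be cyclic of order $l$ and guaranteeing the dichotomy. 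For the case $l\nmid a$, write $\theta=\pi^a u$ with $u$ a unit and $\pi$ a uniformizer; then any extension of $L_\mathcal{P}$ containing $\sqrt[l]{\theta}$ assigns it valuation $a/l\in\tfrac{1}{l}\mathbb{Z}\setminus\mathbb{Z}$ since $\gcd(a,l)=1$, so the ramification index must be divisible by $l$ and hence equal to $l$, which is total ramification.

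Part (2) concerns primes above $l$, since $\mathcal{B}\mid(1-\zeta)\mid l$, and we are therefore in the wild ramification regime. Using $l=\prod_{i=1}^{l-1}(1-\zeta^i)$ together with $(1-\zeta^i)/(1-\zeta)\in\mathcal{O}_L^\times$ one obtains $v_\mathcal{B}(l)=a(l-1)$. The key local fact is that the higher unit filtration $U^{(m)}=1+\mathcal{B}^m$ satisfies $(U^{(m)})^l\subseteq U^{(m+v_\mathcal{B}(l))}$ via the expansion $(1+\varepsilon)^l=1+l\varepsilon+\binom{l}{2}\varepsilon^2+\cdots+\varepsilon^l$, whose dominant contribution switches from $l\varepsilon$ (valuation $m+a(l-1)$) to $\varepsilon^l$ (valuation $lm$) exactly at $m=a$. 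Consequently an element of $L_{\mathcal{B}}^{*}$ is an $l$-th power if and only if it is congruent to one modulo $\mathcal{B}^{al+1}$, which by Hensel's lemma is precisely criterion $(*)$ and yields complete splitting. Criterion $(**)$ modulo $\mathcal{B}^{al}$ detects unramifiedness of $L_\mathcal{B}(\sqrt[l]{\theta})/L_\mathcal{B}$: when it holds but $(*)$ fails, only the inert alternative is left; when it fails, ramification is forced, and being of prime degree the extension is totally ramified.

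The main obstacle is the delicate bookkeeping in part (2): making the thresholds $al$ and $al+1$ rigorous requires carefully tracking how the $l$-th power map acts on the higher unit filtration and invoking Hensel's lemma with exactly the right precision. This is classical Kummer theory of local fields; full details can be found, for example, in Hecke's \emph{Lectures on the Theory of Algebraic Numbers} or in Hasse's analysis of the conductor of a cyclic Kummer extension of degree equal to the residue characteristic.
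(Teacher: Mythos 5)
The paper does not actually prove this proposition: its entire ``proof'' is the citation \emph{see Hecke}, so there is nothing in the text to compare your argument against step by step. Your proposal reconstructs the standard local-field argument that underlies the statement in that reference, and it is essentially correct: the reduction to completions, the observation that $\operatorname{disc}(X^l-\theta)=\pm l^l\theta^{l-1}$ is a unit in the tame case so that splitting versus inertia is read off from $\bar\theta\in\kappa(\mathcal{P})^{*l}$ (using $\zeta\in L$ to make $X^l-\bar\theta$ irreducible otherwise), the valuation argument $v(\sqrt[l]{\theta})=a/l\notin\mathbb{Z}$ for total ramification when $l\nmid a$, and the computation $v_{\mathcal{B}}(l)=a(l-1)$ with the crossover of $l\varepsilon$ against $\varepsilon^l$ at level $m=a$ in the unit filtration, which produces exactly the thresholds $al$ and $al+1$. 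This is the same route Hecke takes (and the one Hasse uses to compute conductors of degree-$l$ Kummer extensions), so you have not diverged from the source --- you have simply supplied the proof the paper omits. Two points deserve tightening. First, your phrase ``an element of $L_{\mathcal{B}}^{*}$ is an $l$-th power if and only if it is congruent to one modulo $\mathcal{B}^{al+1}$'' is not what you mean: the correct criterion is congruence to \emph{an $l$-th power} $\xi^l$ modulo $\mathcal{B}^{al+1}$, after which one divides by $\xi^l$ to land in $U^{(al+1)}$ and only then applies the filtration bound and Hensel. Second, the implication ``$(**)$ solvable but $(*)$ not solvable $\Rightarrow$ unramified of residue degree $l$'' is the one genuinely delicate step, and you assert it rather than derive it; it requires replacing $\theta$ by $\theta\xi^{-l}\in U^{(al)}$ and checking, via the substitution $\sqrt[l]{1+\eta}=1+(1-\zeta)w$ or an equivalent different/discriminant computation, that the resulting degree-$l$ extension is unramified. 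Since splitting has already been excluded, inertia then follows from primality of the degree, as you say. With those two repairs your sketch is a complete and correct proof, and in particular more informative than the paper's bare citation.
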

\begin{proof}
see [\cite{Hec} Theorem $118$, Theorem $119$]
\end{proof}
\section{\Large Prime factorization in pure quintic field and in its normal closure}
Let $\Gamma$, $k_0$ and $k$ as above. We begin with the factorization of primes in $\Gamma$, next we give the decomposition of primes in $k_0$, knowing that $5$ is the unique prime ramified in $k_0$. Finally the results of Proposition \ref{prKummer} allow us to state the prime factorization in $k$. For more details on decomposition laws, we refer the reader to \cite{Hec}, $\,$\cite{Irland} $\,$ \cite{cyc}.\\
Let $p\,\neq\, 5$ a prime number, $\mathcal{P}$ a prime ideal of $\Gamma$, $\pi$ a prime of $k_0$ and $\mathcal{L}$ a prime ideal of $k$. We denote by $\mathcal{N}$ the absolute norm.
\begin{proposition}\label{ramifGamm} Using the same notations as above, we have:
\item[$(1)$] If $p$ divides the radicand $n$ then $p\mathcal{O}_\Gamma = \mathcal{P}^5$ and $\mathcal{N}(\mathcal{P}) = p$.

\item[$(2)$] If $p\nmid 5n$ and $p\equiv\pm2\,(\mathrm{mod}\,5)$ then $p\mathcal{O}_\Gamma = \mathcal{P}_1\mathcal{P}_2$ and $\mathcal{N}(\mathcal{P}_1) = p$, $\mathcal{N}(\mathcal{P}_2) = p^4$.

\item[$(3)$] If $p\nmid 5n$ and $p\equiv-1\,(\mathrm{mod}\,5)$ then $p\mathcal{O}_\Gamma = \mathcal{P}_1\mathcal{P}_2\mathcal{P}_3$ and $\mathcal{N}(\mathcal{P}_1) = p$, $\mathcal{N}(\mathcal{P}_2) = \mathcal{N}(\mathcal{P}_3) = p^2$.

\item[$(4)$] If $p\nmid 5n$ and $p\equiv1\,(\mathrm{mod}\,5)$ then:
\begin{itemize}

\item[-]$p\mathcal{O}_\Gamma = \mathcal{P}_1\mathcal{P}_2\mathcal{P}_3\mathcal{P}_4\mathcal{P}_5$ and $\mathcal{N}(\mathcal{P}_i) = p$, if $n$ is quintic residue modulo $p$.
\item[-]$p\mathcal{O}_\Gamma = \mathcal{P}$ and $\mathcal{N}(\mathcal{P}_i) = p^5$, if $n$ is not a quintic residue modulo $p$.
 \end{itemize}
\end{proposition}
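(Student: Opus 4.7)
The plan is to base the entire argument on the Dedekind--Kummer theorem applied to the minimal polynomial $f(X)=X^5-n$ of $\sqrt[5]{n}$ over $\mathbb{Q}$, supplemented by a direct local argument at the ramified primes. Since $\operatorname{disc}(f)=5^5 n^4$, for any prime $p\nmid 5n$ the index $[\mathcal{O}_\Gamma:\mathbb{Z}[\sqrt[5]{n}]]$ is coprime to $p$, so the factorization of $p\mathcal{O}_\Gamma$ is read off directly from the factorization of $f(X)\pmod p$.

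For item 1, when $p\mid n$ write $v_p(n)=e\in\{1,2,3,4\}$ using that $n$ is $5$th-power free. Passing to the completion $\mathbb{Q}_p$ and performing a standard change of variable makes $f$ an Eisenstein polynomial at $p$ (using $\gcd(e,5)=1$), so $\mathbb{Q}_p(\sqrt[5]{n})/\mathbb{Q}_p$ is totally ramified of degree $5$, giving $p\mathcal{O}_\Gamma=\mathcal{P}^5$ with residue degree $1$ and therefore $\mathcal{N}(\mathcal{P})=p$.

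For items 2--4, the task is to factor $X^5-n$ over $\mathbb{F}_p$, which I would do by tracking Frobenius orbits on the roots $\{a,\zeta_5 a,\ldots,\zeta_5^4 a\}$ in $\overline{\mathbb{F}_p}$. When $p\not\equiv 1\pmod 5$ the map $x\mapsto x^5$ is a bijection of $\mathbb{F}_p^*$, so there is a unique root $a\in\mathbb{F}_p$ producing a linear factor $(X-a)$; the remaining quartic factor reflects the cycle structure of multiplication by $p$ on $(\mathbb{Z}/5\mathbb{Z})^*$. When $p\equiv\pm 2\pmod 5$ this has order $4$, giving one irreducible quartic factor and thus $p\mathcal{O}_\Gamma=\mathcal{P}_1\mathcal{P}_2$ with the claimed norms $p$ and $p^4$; when $p\equiv -1\pmod 5$ the order is $2$, giving two irreducible quadratic factors and therefore $p\mathcal{O}_\Gamma=\mathcal{P}_1\mathcal{P}_2\mathcal{P}_3$ with norms $p,p^2,p^2$. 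For item 4 with $p\equiv 1\pmod 5$, the field $\mathbb{F}_p$ already contains all fifth roots of unity, so $f$ splits into five linear factors as soon as it has one root, and the existence of a root is exactly the condition that $n$ is a quintic residue modulo $p$; in that case $p$ splits completely into five primes of norm $p$. If $n$ is not a quintic residue mod $p$, computing $n^{(p^m-1)/5}$ shows that the smallest extension of $\mathbb{F}_p$ in which $n$ becomes a fifth power is $\mathbb{F}_{p^5}$, so $f$ is irreducible mod $p$ and $p\mathcal{O}_\Gamma=\mathcal{P}$ with $\mathcal{N}(\mathcal{P})=p^5$.

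The main obstacle I anticipate is the justification of Dedekind--Kummer at primes where $p$ could divide the conductor of $\mathbb{Z}[\sqrt[5]{n}]$ inside $\mathcal{O}_\Gamma$; this is handled by the discriminant computation $\operatorname{disc}(f)=5^5 n^4$ for the unramified cases, while for $p\mid n$ the local Eisenstein argument supersedes Dedekind--Kummer entirely. The verification that the only primes contributing to the global index are $5$ and those dividing $n$ is standard for pure quintic fields and is the one point that warrants a careful reference to the literature cited in the paper.
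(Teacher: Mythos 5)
Your argument is correct, but it is worth noting that the paper itself does not prove this proposition at all: it simply cites Janusz, \emph{Algebraic Number Fields}, Theorem~7.4, which is precisely the general statement on splitting in pure extensions $\mathbb{Q}(\sqrt[m]{n})$. What you have written is essentially a self-contained reconstruction of that theorem in the case $m=5$: Dedekind--Kummer plus the computation $\operatorname{disc}(X^5-n)=5^5n^4$ reduces items 2--4 to factoring $X^5-n$ over $\mathbb{F}_p$, which you correctly control via the Frobenius orbits on the exponents in $(\mathbb{Z}/5\mathbb{Z})^*$ (order of $p$ mod $5$ equal to $4$, $2$, or $1$, giving the partitions $1+4$, $1+2+2$, $1+1+1+1+1$ or a single irreducible quintic according to whether $n$ is a quintic residue), and the valuation/Eisenstein argument at $p\mid n$ handles item 1 without any appeal to the index. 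One small remark: the worry you flag at the end about the conductor of $\mathbb{Z}[\sqrt[5]{n}]$ is already fully disposed of by your own discriminant computation --- any prime dividing the index must divide $5^5n^4$, and those primes are exactly the ones you treat by the local argument or exclude by hypothesis --- so no further reference to the literature is actually needed. Your route buys a complete, elementary proof where the paper offers only a citation; the cost is length, which is presumably why the authors deferred to Janusz.
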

\begin{proof}
For $(1)$ see [\cite{Kobayashi}, Lemma 3].\\
Since the discriminant of $\Gamma/\mathbb{Q}$ is $disc(\Gamma/\mathbb{Q}) = 5^5n^4$ then if $p\nmid 5n$ we have that $p$ is unramified in $\Gamma$. Then the decomposition of $p\mathcal{O}_\Gamma$ is governed by the factorization of $X^5-n$ over the field 
$\mathbb{F}_p$ of $p$ elements.\\
- If $p\equiv1\,(\mathrm{mod}\,5)$ then $\mathbb{F}_p$ has five fifth roots of unity. Therefore either $X^5-n$ splits into five linear factors, or is irreducible, according to whether or not $n$ is a quintic residue modulo $p$. So in these two cases, $p\mathcal{O}_\Gamma$ splits into five ideals of norm $p$, or it remains inert of norm $p^5$.\\
-If $p\,\not\equiv\,1\,(\mathrm{mod}\,5)$ then $X^5-n$ has a unique linear factor over $\mathbb{F}_p$. Let $X-a$ be the linear factor of $X^5-n$ over $\mathbb{F}_p$. The remaining linear factors are $X-\zeta_5^{k}a$ where $k \in \{1,2,3,4\}$.\\
If $p\equiv-1\,(\mathrm{mod}\,5)$ then $(X-\zeta_5^2a)(X-\zeta_5^3a)$ and $(X-\zeta_5a)(X-\zeta_5^4a)$ are irreducible quadratics over $\mathbb{F}_p$. So $p\mathcal{O}_\Gamma$ is a product of a prime ideal of norm $p$ and two of norm $p^2$\\
If $p\equiv\pm2 \,(\mathrm{mod}\,5)$ then the $\zeta_5^ka$ are all conjugates over $\mathbb{F}_p$. Then $(X-\zeta_5a)(X-\zeta_5^2a)(X-\zeta_5^3a)(X-\zeta_5^4a)$ is irreducible over $\mathbb{F}_p$. So $p\mathcal{O}_\Gamma$ is a product of a prime ideal of norm $p$ and one of norm $p^4$.
\end{proof}
The ramification of the prime $5$ needs a particular treatment.
\begin{theorem}\label{fieldKind}
Using the same notations as above. Let $f$ be the conductor of $k/k_0$, and $R = q_1...q_s$ denotes the square free product of all prime divisors of the radicand $n$ of $\Gamma$, then $f$ satisfies the relation:

\begin{equation*}
f^4 =
\left\{
\begin{array}{rl}
5^2R^4 & \text{if } n^4\,\not\equiv\,1\,(\text{mod}\,25)\,\, \text{(field of the first kind)},\\
R^4 & \text{if } n^4\equiv1\,(\text{mod}\,25)\,\, \text{(field of the second kind)}.
\end{array}
\right.
\end{equation*}

\end{theorem}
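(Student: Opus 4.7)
The plan is to apply the conductor--discriminant formula and then compute the relative discriminant $\mathfrak{d}(k/k_0)$ prime by prime via Proposition \ref{prKummer}. Since $k/k_0$ is cyclic of degree $5$, its four non-trivial characters share a common conductor $f$, so the conductor--discriminant formula gives $\mathfrak{d}(k/k_0)=f^{4}$, and it suffices to determine the local conductor exponent of $f$ at each prime of $\mathcal{O}_{k_0}$.

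For a prime $\mathfrak{q}$ of $\mathcal{O}_{k_0}$ lying above a rational prime $q_i\neq 5$, the $5^{th}$ power-freeness of $n$ forces $v_{\mathfrak{q}}(n)\in\{0,1,2,3,4\}$, and Proposition \ref{prKummer}(1) says that $\mathfrak{q}$ is tamely totally ramified in $k/k_0$ exactly when $q_i\mid n$, with local conductor exponent $1$; otherwise $\mathfrak{q}$ is unramified. Collecting, the part of $f$ supported outside $\lambda$ is $R'\mathcal{O}_{k_0}$, where $R'$ is the squarefree product of the prime divisors $q_i\neq 5$ of $n$, so this part contributes $(R')^{4}\mathcal{O}_{k_0}$ to $f^{4}$.

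The decisive step is the contribution at $\lambda$. When $5\nmid n$, Proposition \ref{prKummer}(2) applies with $a=v_{\lambda}(1-\zeta_5)=1$, so $\lambda$ is unramified iff $n\equiv X^{5}\pmod{\lambda^{6}}$ is solvable in $\mathcal{O}_{k_0}$. Writing $5=\lambda^{4}u$ with $u\equiv -1\pmod{\lambda}$ and expanding $(1+\lambda y)^{5}$ modulo $\lambda^{6}$, one finds $(1+\lambda y)^{5}\equiv 1\pmod{\lambda^{6}}$ for every principal unit; consequently the fifth powers of units of $\mathcal{O}_{k_0,\lambda}$ modulo $\lambda^{6}$ are exactly the Teichm\"uller representatives $\mu_{4}$, whose restrictions to $\mathbb{Z}/25\mathbb{Z}$ are the residues $\pm 1,\pm 7$, i.e.\ precisely those $n$ with $n^{4}\equiv 1\pmod{25}$. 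Thus in the second-kind case $\lambda$ is unramified and $f^{4}=R^{4}\mathcal{O}_{k_0}$; in the first-kind case with $5\nmid n$, $\lambda$ is totally ramified and a direct local computation of the different, via an Eisenstein-type polynomial for a suitable uniformizer of $k_{\Lambda}/k_{0,\lambda}$, gives conductor exponent $2$ at $\lambda$, i.e.\ an extra factor $\lambda^{8}=5^{2}$ in $f^{4}$.

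Finally, if $5\mid n$, then $v_{\lambda}(n)=4e$ with $e\in\{1,2,3,4\}$ is never divisible by $5$, so Proposition \ref{prKummer}(1) forces total ramification at $\lambda$ (and this subcase is necessarily of the first kind, since $n^{4}\equiv 0\pmod{25}$). The local analysis, replacing $X^{5}-n$ by an Eisenstein polynomial in a well-chosen uniformizer of the local extension built from $\sqrt[5]{n}$ and $\lambda$, yields conductor exponent $6$ at $\lambda$, contributing $\lambda^{24}=5^{6}$ to $f^{4}$; using $R=5R'$, this equals $5^{2}\cdot 5^{4}(R')^{4}=5^{2}R^{4}$, matching the claimed formula. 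The main obstacle is the fine local analysis at $\lambda$: both the equivalence between fifth-power solvability modulo $\lambda^{6}$ and the congruence $n^{4}\equiv 1\pmod{25}$, and the exact determination of the local conductor exponent (not merely its positivity) in the ramified subcases, rest on the explicit structure of the principal-unit filtration of $\mathcal{O}_{k_0,\lambda}$ and a careful computation of the different of the local Kummer extension.
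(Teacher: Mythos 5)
The paper does not actually prove this theorem: its ``proof'' is the single line ``see \ref{conductor}'', a citation of Berwick's tract, so your argument is a genuinely different, self-contained route, and its architecture is sound. Since $k/k_0$ is cyclic of degree $5$, the four nontrivial characters all have conductor $f$, the conductor--discriminant formula gives $\mathfrak{d}(k/k_0)=f^{4}$, the primes above $q_i\neq 5$ dividing $n$ are tamely totally ramified with conductor exponent $1$ (yielding $(R')^{4}$), and all the content sits at $\lambda$. Two remarks. First, a small imprecision: by Proposition \ref{prKummer}(2) with $a=1$, \emph{unramified} corresponds to solvability of $n\equiv X^{5}$ modulo $\lambda^{5}$, whereas modulo $\lambda^{6}$ is the condition for complete splitting; your computation $(1+\lambda y)^{5}\equiv 1\pmod{\lambda^{6}}$ does show the fifth powers of local units are exactly the Teichm\"uller representatives modulo either power, and since for rational $n$ prime to $5$ the congruences $n\equiv\omega\pmod{\lambda^{5}}$ and $n\equiv\omega\pmod{\lambda^{6}}$ both reduce to $n\equiv\omega\pmod{25}$, the inert case never occurs and your conclusion ($\lambda$ unramified iff $n\equiv\pm1,\pm7\pmod{25}$ iff $n^{4}\equiv1\pmod{25}$) stands. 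Second, the quantitative heart of the theorem --- conductor exponent exactly $2$ at $\lambda$ when $5\nmid n$ and $n^{4}\not\equiv1\pmod{25}$, and exactly $6$ when $5\mid n$ --- is only announced as ``a direct local computation''; the claimed values are correct (the ramification break is $t=\tfrac{5\cdot 4}{4}-m$ with $m=4$ in the unit-radicand case, giving exponent $t+1=2$, and $t=5$, exponent $6$, in the uniformizer case, which occurs because $\gcd(v_{\lambda}(n),5)=1$ lets you replace $\sqrt[5]{n}$ by $\sqrt[5]{\lambda\cdot u}$), and the bookkeeping $\lambda^{8}=5^{2}$, $\lambda^{24}=5^{6}=5^{2}\cdot 5^{4}$ with $R=5R'$ matches the stated formula; but this break/different computation is the one step you must actually write out for the proof to be complete, since everything else is routine.
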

\begin{proof}
see \cite{conductor}.
\end{proof}
\begin{proposition}\label{dec5GAmma} Using the same notations as above.  The decomposition into prime factors of $5$ is:
\begin{itemize}
\item[-] If $\Gamma$ is of first kind then: $5\mathcal{O}_\Gamma = \mathcal{P}^5$ and $\mathcal{N}(\mathcal{P}) = 5$.
\item[-] If $\Gamma$ is of second kind then: $5\mathcal{O}_\Gamma = \mathcal{P}_1\mathcal{P}_2^4$ and $\mathcal{N}(\mathcal{P}_i) = 5$.
\end{itemize}
\end{proposition}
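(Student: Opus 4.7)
The plan is to split into cases according to whether $5\mid n$ or not, and, in the latter case, to apply Proposition~\ref{prKummer} to the prime $\lambda = 1-\zeta_{5}$ in $k/k_{0}$, then transfer the decomposition of $\lambda$ in $k$ down to $\Gamma$ via the Galois structure of $k/\mathbb{Q}$.

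When $5\mid n$, write $n=5^{e_{0}}m$ with $e_{0}\in\{1,2,3,4\}$ and $\gcd(m,5)=1$; then $n^{4}\equiv 0\pmod{25}$ puts $\Gamma$ in the first kind, and $(\sqrt[5]{n})^{5}=5^{e_{0}}m$ gives $5\,v_{\mathcal{P}}(\sqrt[5]{n})=e_{0}\,v_{\mathcal{P}}(5)$ at any prime $\mathcal{P}$ of $\Gamma$ above $5$; coprimality of $e_{0}$ and $5$, together with $\sum e_{\mathcal{P}}f_{\mathcal{P}}=5$, forces $v_{\mathcal{P}}(5)=5$ and $f_{\mathcal{P}}=1$ at a single prime, yielding $5\mathcal{O}_{\Gamma}=\mathcal{P}^{5}$ with $\mathcal{N}(\mathcal{P})=5$. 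When $\gcd(n,5)=1$, the task is to decide the solvability of $n\equiv X^{5}\pmod{\lambda^{5}}$ and $\pmod{\lambda^{6}}$ in $\mathcal{O}_{k_{0}}$, in order to apply Proposition~\ref{prKummer}(2) with $a=1$. The key computation is the expansion
\[
(1+\lambda y)^{5}\;\equiv\;1+\lambda^{5}(\varepsilon y + y^{5})\pmod{\lambda^{6}},\qquad \varepsilon:=5/\lambda^{4},
\]
combined with $y^{5}\equiv y\pmod{\lambda}$ and the congruence $\varepsilon\equiv -1\pmod{\lambda}$ (obtained from $\varepsilon=(1+\zeta_{5})(1+\zeta_{5}+\zeta_{5}^{2})(1+\zeta_{5}+\zeta_{5}^{2}+\zeta_{5}^{3})\equiv 2\cdot 3\cdot 4\equiv -1\pmod{\lambda}$); these imply $(1+\lambda y)^{5}\equiv 1\pmod{\lambda^{6}}$ for every $y\in\mathcal{O}_{k_{0}}$, so the image of the fifth-power map on $(\mathcal{O}_{k_{0}}/\lambda^{6})^{\ast}$ is the unique $4$-element subgroup. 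Using $\mathbb{Z}\cap\lambda^{5}\mathcal{O}_{k_{0}}=\mathbb{Z}\cap\lambda^{6}\mathcal{O}_{k_{0}}=25\mathbb{Z}$, both solvability conditions for $n\in\mathbb{Z}$ collapse to $n^{4}\equiv 1\pmod{25}$; in particular the ``inert'' alternative of Proposition~\ref{prKummer}(2) never occurs for rational $n$, so $\lambda$ splits completely in $k/k_{0}$ when $\Gamma$ is of second kind and is totally ramified when $\Gamma$ is of first kind.

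To transfer these conclusions into $\Gamma$, I use that $k/\mathbb{Q}$ is Galois with group $\operatorname{Gal}(k/\mathbb{Q})=\langle\sigma,\tau\rangle$ of order $20$ (the Frobenius group), in which $\operatorname{Gal}(k/\Gamma)=\langle\tau\rangle$ is a Sylow $2$-subgroup. In the first-kind case, $e(\lambda/5)\cdot e(\mathfrak{P}/\lambda)=4\cdot 5=20$ at the unique prime $\mathfrak{P}$ of $k$ above $5$, and since $[\Gamma:\mathbb{Q}]=5$, multiplicativity of ramification forces $(e,f)=(5,1)$ in $\Gamma$. In the second-kind case, the five primes $\mathfrak{P}_{1},\dots,\mathfrak{P}_{5}$ of $k$ above $5$ have $(e,f)=(4,1)$ over $\mathbb{Q}$, and their decomposition groups exhaust the five Sylow $2$-subgroups of $\langle\sigma,\tau\rangle$; exactly one coincides with $\langle\tau\rangle$, while the other four meet $\langle\tau\rangle$ trivially by the Frobenius-complement property, so the $\langle\tau\rangle$-action on the $\mathfrak{P}_{i}$ has orbits of sizes $1$ and $4$. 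Standard inertia-decomposition bookkeeping in $k/\Gamma$ then yields two primes of $\Gamma$ above $5$ with invariants $(e,f)=(1,1)$ and $(4,1)$ respectively, giving $5\mathcal{O}_{\Gamma}=\mathcal{P}_{1}\mathcal{P}_{2}^{4}$ with both norms equal to $5$. The main technical obstacle is the fifth-power computation modulo $\lambda^{6}$: the delicate cancellation $\varepsilon+1\equiv 0\pmod{\lambda}$ raises the valuation of $(1+\lambda y)^{5}-1$ from the naively expected $5$ up to $6$, which is precisely what rules out the inert alternative in Proposition~\ref{prKummer}(2); once this is in hand, the remaining steps are routine ramification accounting.
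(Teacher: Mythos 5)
Your argument is correct, and it is genuinely different from what the paper does: the paper states Proposition~\ref{dec5GAmma} with no proof at all, implicitly deferring to the cited literature on decomposition in pure fields (Berwick's integral bases via Theorem~\ref{fieldKind} and the references given for decomposition laws), whereas you supply a complete self-contained derivation. Your route --- handling $5\mid n$ directly by the valuation identity $5\,v_{\mathcal{P}}(\sqrt[5]{n})=e_{0}\,v_{\mathcal{P}}(5)$, and otherwise applying Proposition~\ref{prKummer}(2) to $\lambda=1-\zeta_{5}$ with $a=1$ --- is sound: the expansion $(1+\lambda y)^{5}\equiv 1+\lambda^{5}(\varepsilon y+y^{5})\pmod{\lambda^{6}}$ with $\varepsilon=5/\lambda^{4}\equiv 2\cdot3\cdot4\equiv-1\pmod{\lambda}$ and $y^{5}\equiv y\pmod{\lambda}$ does show every principal unit is a fifth power mod $\lambda^{6}$, so the image of the fifth-power map is the order-$4$ subgroup, and since $\mathbb{Z}\cap\lambda^{5}\mathcal{O}_{k_{0}}=\mathbb{Z}\cap\lambda^{6}\mathcal{O}_{k_{0}}=25\mathbb{Z}$ both solvability criteria reduce to $n^{4}\equiv1\pmod{25}$, correctly eliminating the inert alternative and recovering the first/second-kind dichotomy of Theorem~\ref{fieldKind}. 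The descent to $\Gamma$ is also right: in the split case the five decomposition groups are the five Sylow $2$-subgroups of $F_{20}$, which are self-normalizing and pairwise trivially intersecting, so $\langle\tau\rangle$ has orbits of sizes $1$ and $4$ on the primes of $k$ above $5$, giving $5\mathcal{O}_{\Gamma}=\mathcal{P}_{1}\mathcal{P}_{2}^{4}$ with residue degree $1$ at both; the totally ramified case forces $(e,f)=(5,1)$ by multiplicativity. What your approach buys is a uniform proof from the Kummer-theoretic criterion already quoted in Section~\ref{deckumm}, which moreover reproves Proposition~\ref{dec5k} along the way (indeed you establish the splitting of $\lambda$ in $k/k_{0}$ first and then descend to $\Gamma$, the reverse of the paper's logical order, which derives Proposition~\ref{dec5k} from Proposition~\ref{dec5GAmma}); the cost is the somewhat delicate unit computation mod $\lambda^{6}$, which the classical integral-basis approach avoids.
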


\begin{proof}
See the proof of [\cite{Mayer}, Theorem 1.1].
\end{proof}

The decomposition law in the $5^{th}$ cyclotomic field $k_0$ is as follows:
\begin{proposition}\label{corocyc}
Using the same notations as above, we have :
\item[-] $5\mathcal{O}_{k_0} = \lambda^4 = (1-\zeta_5)^4$.
\item[-] $p\mathcal{O}_{k_0} = \pi_1\pi_2\pi_3\pi_4$ in $k_0$, if $p \, \equiv \, 1\, (\mathrm{mod}\, 5)$.
\item[-] $p\mathcal{O}_{k_0} = \pi_1\pi_2$ in $k_0$, if $p \, \equiv \, -1\, (\mathrm{mod}\, 5)$.
\item[-] $p\mathcal{O}_{k_0} = p$ in $k_0$ (inert), if $p \, \equiv \, \pm2\, (\mathrm{mod}\, 5)$.
\end{proposition}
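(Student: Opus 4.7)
The plan is to combine the standard decomposition theorem for cyclotomic fields with an explicit computation handling the ramification at $5$. Since $\operatorname{Gal}(k_0/\mathbb{Q}) \cong (\mathbb{Z}/5\mathbb{Z})^\ast$ is cyclic of order $4$, the decomposition of an unramified rational prime $p$ is governed entirely by the multiplicative order of $p$ modulo $5$, while $5$ itself must be treated separately.

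First I would handle the prime $5$. Starting from the factorization $\Phi_5(X)=\prod_{i=1}^{4}(X-\zeta_5^{i})$ of the $5$-th cyclotomic polynomial and evaluating at $X=1$, one obtains $5=\prod_{i=1}^{4}(1-\zeta_5^{i})$. For each $1\le i\le 4$ the quotient $(1-\zeta_5^{i})/(1-\zeta_5)=1+\zeta_5+\cdots+\zeta_5^{\,i-1}$ is a unit in $\mathcal{O}_{k_0}$, because one can write down an explicit multiplicative inverse using $(1-\zeta_5^{ij})/(1-\zeta_5^{i})$ for any $j$ with $ij\equiv 1\pmod 5$. Consequently each factor $1-\zeta_5^{i}$ is an associate of $\lambda=1-\zeta_5$, and the ideal identity $(5)=(\lambda)^4$ follows. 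This also shows $\lambda$ is prime in $\mathcal{O}_{k_0}$ with ramification index $4$, so $5$ is totally ramified.

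For a prime $p\neq 5$, I would argue that $p$ is unramified in $k_0$ since only $5$ divides the discriminant of $k_0$. The decomposition $p\mathcal{O}_{k_0}=\pi_1\cdots\pi_g$ then has all residue degrees equal to a common value $f$ with $fg=4$, where $f$ is the order of the Frobenius. Under the canonical isomorphism $\operatorname{Gal}(k_0/\mathbb{Q})\cong(\mathbb{Z}/5\mathbb{Z})^\ast$, this $f$ equals the multiplicative order of $p$ modulo $5$. A direct case check now completes the classification: if $p\equiv 1\pmod 5$ then $f=1$ and $g=4$; if $p\equiv -1\pmod 5$ then $p^2\equiv 1$ while $p\not\equiv 1$, so $f=2$ and $g=2$; and if $p\equiv \pm 2\pmod 5$ then $p^2\equiv 4\not\equiv 1$ and $p^4\equiv 1$, giving $f=4$, $g=1$, i.e.\ $p$ is inert.

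Both parts are direct specializations of classical results on cyclotomic arithmetic, so no serious obstacle is anticipated. The only delicate point is the verification that each $(1-\zeta_5^{i})/(1-\zeta_5)$ is a unit in $\mathcal{O}_{k_0}$, which is needed to identify all primes above $5$ with $\lambda$; once this is in hand, everything reduces to bookkeeping with the order of $p$ in $(\mathbb{Z}/5\mathbb{Z})^\ast$.
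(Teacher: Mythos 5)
Your proof is correct and complete: the totally ramified factorization $(5)=(\lambda)^4$ via $\Phi_5(1)=5$ and the unit quotients $(1-\zeta_5^i)/(1-\zeta_5)$, together with the Frobenius-order computation $fg=4$ for $p\neq 5$, is exactly the classical argument. The paper itself offers no proof of this proposition and simply defers to its references on decomposition laws in cyclotomic fields, where the argument given there is the same as yours, so there is nothing to reconcile.
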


\begin{proof}
Its follows from [\cite{cyc}, Theorem 2.13].
\end{proof}
Next, let $k$ be the normal closure of $\Gamma$. The decomposition of the prime $5$ in $k$ is the purpose of the following proposition.
\begin{proposition}\label{dec5k}
The prime $5$ decomposes in $k$ as follows:
\item[-] If $\Gamma$ is of first kind then: $5\mathcal{O}_k = \mathcal{L}^{20}$.
\item[-] If $\Gamma$ is of second kind then: $5\mathcal{O}_k = \mathcal{L}_1^4\mathcal{L}_2^4\mathcal{L}_3^4\mathcal{L}_4^4\mathcal{L}_5^4$.
\end{proposition}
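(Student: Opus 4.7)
The plan is to reduce the problem to analyzing how $\lambda\,=\,1-\zeta_5$, the unique prime of $k_0$ above $5$, decomposes in the Kummer extension $k\,=\,k_0(\sqrt[5]{n})/k_0$ of degree $5$. By Proposition~\ref{corocyc}, $5\mathcal{O}_{k_0}\,=\,\lambda^4$, so once the local behaviour of $\lambda$ in $k/k_0$ is known, the decomposition of $5$ in $k$ follows by multiplicativity of ramification indices: total ramification of $\lambda$ forces $e(\mathcal{L}|5)\,=\,20$, hence $5\mathcal{O}_k\,=\,\mathcal{L}^{20}$, whereas complete splitting of $\lambda$ yields five primes $\mathcal{L}_i$ each with $e(\mathcal{L}_i|5)\,=\,4$, hence $5\mathcal{O}_k\,=\,\mathcal{L}_1^4\mathcal{L}_2^4\mathcal{L}_3^4\mathcal{L}_4^4\mathcal{L}_5^4$. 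All that remains is to apply Proposition~\ref{prKummer} and detect when $n$ is a $5$th power modulo $\lambda^5$ and $\lambda^6$.

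For the first-kind case $n^4\not\equiv 1\,(\mathrm{mod}\,25)$, I would split on divisibility by $5$. If $5\mid n$, write $n\,=\,5^e n'$ with $e\in\{1,2,3,4\}$ and $\gcd(n',5)\,=\,1$; then $v_\lambda(n)\,=\,4e$ is coprime to $5$, so Proposition~\ref{prKummer}(1), third bullet, gives total ramification of $\lambda$. If $5\nmid n$, I apply Proposition~\ref{prKummer}(2) with $a\,=\,1$. The key local computation is: writing $X\,=\,a_0+\lambda b$ with $a_0\in\mathbb{Z}$ and $b\in\mathcal{O}_{k_0}$, and using $5\,=\,u\lambda^4$ for a unit $u\in\mathcal{O}_{k_0}^*$, each intermediate binomial term $\binom{5}{k}a_0^{5-k}(\lambda b)^k$ for $1\leq k\leq 4$ has $\lambda$-valuation $\geq 4+k\geq 5$, and $(\lambda b)^5$ has valuation $5$, so $X^5\equiv a_0^5\,(\mathrm{mod}\,\lambda^5)$. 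Because $\mathbb{Z}\cap\lambda^5\mathcal{O}_{k_0}\,=\,25\mathbb{Z}$, the image of the $5$th power map from units into $\mathcal{O}_{k_0}/\lambda^5$ reduces to $\{a_0^5\,(\mathrm{mod}\,25):a_0\in\{1,2,3,4\}\}\,=\,\{\pm 1,\pm 7\}$; the first-kind hypothesis places $n$ outside this set, so no solution to $n\equiv X^5\,(\mathrm{mod}\,\lambda^5)$ exists and $\lambda$ is totally ramified.

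For the second-kind case $n^4\equiv 1\,(\mathrm{mod}\,25)$ (which forces $5\nmid n$), the same expansion must be pushed modulo $\lambda^6$, where now the $k\,=\,1$ term and the $k\,=\,5$ term both contribute at level $\lambda^5$. Using $b^5\equiv b\,(\mathrm{mod}\,\lambda)$ by Fermat, one obtains $X^5\equiv a_0^5+\lambda^5 b(u a_0^4+1)\,(\mathrm{mod}\,\lambda^6)$. The crucial identity is $u\equiv -1\,(\mathrm{mod}\,\lambda)$: factoring $5\,=\,\prod_{i=1}^{4}(1-\zeta_5^i)\,=\,\lambda^4(1+\zeta_5)(1+\zeta_5+\zeta_5^2)(1+\zeta_5+\zeta_5^2+\zeta_5^3)$ and reducing the second factor modulo $\lambda$ via $\zeta_5\equiv 1$ gives $u\equiv 2\cdot 3\cdot 4\,=\,24\equiv -1\,(\mathrm{mod}\,\lambda)$. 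Combined with Fermat's $a_0^4\equiv 1\,(\mathrm{mod}\,\lambda)$, this yields $ua_0^4+1\equiv 0\,(\mathrm{mod}\,\lambda)$, so the correction term lies in $\lambda^6\mathcal{O}_{k_0}$ and once again $X^5\equiv a_0^5\,(\mathrm{mod}\,\lambda^6)$. Choosing $a_0\in\{1,2,3,4\}$ with $a_0^5\equiv n\,(\mathrm{mod}\,25)$, possible exactly under the second-kind hypothesis, gives a solution $X\,=\,a_0$ to $n\equiv X^5\,(\mathrm{mod}\,\lambda^6)$, and by Proposition~\ref{prKummer}(2) the prime $\lambda$ splits completely in $k/k_0$.

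The main obstacle is the local computation at level $\lambda^6$ in the second-kind case: the term $\lambda^5 b(ua_0^4+1)$ has a priori $\lambda$-valuation exactly $5$, so its vanishing modulo $\lambda^6$ is not automatic and depends entirely on the specific residue $u\equiv -1\,(\mathrm{mod}\,\lambda)$ of the unit prescribed by $5\,=\,u\lambda^4$. Note in particular that the inert case of Proposition~\ref{prKummer}(2) never arises, because conditions $(*)$ and $(**)$ turn out to coincide when $n$ is a rational integer coprime to $5$. Once this local analysis is secured, the global statements $5\mathcal{O}_k\,=\,\mathcal{L}^{20}$ and $5\mathcal{O}_k\,=\,\mathcal{L}_1^4\mathcal{L}_2^4\mathcal{L}_3^4\mathcal{L}_4^4\mathcal{L}_5^4$ follow at once from multiplicativity of ramification indices.
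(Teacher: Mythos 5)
Your argument is correct, but it takes a genuinely different route from the paper. The paper's proof is a two-line tower argument: it invokes Proposition \ref{dec5GAmma} (the shape of $5\mathcal{O}_\Gamma$, namely $\mathcal{P}^5$ for the first kind and $\mathcal{P}_1\mathcal{P}_2^4$ for the second) together with $5\mathcal{O}_{k_0}=\lambda^4$, and then reads off the only ramification data in $k$ compatible with both subfields: divisibility of $e(\mathcal{L}\mid 5)$ by $20$ forces $\mathcal{L}^{20}$ in the first case, while in the second case total ramification and inertia of $\lambda$ in $k/k_0$ are both excluded by the data over $\Gamma$, leaving complete splitting and hence $\mathcal{L}_1^4\cdots\mathcal{L}_5^4$. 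You instead bypass $\Gamma$ entirely and decide the behaviour of $\lambda$ in the degree-$5$ Kummer extension $k/k_0$ directly via Proposition \ref{prKummer}(2), computing fifth powers modulo $\lambda^5$ and $\lambda^6$: the valuation count $v_\lambda(n)=4e$ for $5\mid n$, the congruence $X^5\equiv a_0^5\ (\mathrm{mod}\ \lambda^5)$, the identification of fifth powers of units with $\{\pm1,\pm7\}$ modulo $25$, and the key cancellation $ua_0^4+1\equiv 0\ (\mathrm{mod}\ \lambda)$ coming from $u\equiv -1\ (\mathrm{mod}\ \lambda)$ are all correct. What your approach buys is self-containedness: it effectively reproves the Dedekind-type criterion that the paper imports without proof through Proposition \ref{dec5GAmma} and Theorem \ref{fieldKind}, and it makes transparent why the first/second kind dichotomy $n\equiv\pm1\pm7\ (\mathrm{mod}\ 25)$ is exactly the splitting condition at $\lambda$. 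The cost is the explicit local computation at level $\lambda^6$, which the paper's tower argument avoids entirely. One small point worth making explicit in your write-up: in the first-kind case with $5\nmid n$ you should note that a solution $X$ of $(**)$ is necessarily a $\lambda$-adic unit (otherwise $X^5\equiv 0$ would force $\lambda^5\mid n$), so restricting to $a_0\in\{1,2,3,4\}$ is legitimate; you assume this implicitly.
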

\begin{proof}
We have $5$ ramifies in $k_0 = \mathbb{Q}(\zeta_5)$ then:
\item[-]Suppose that $\Gamma$ is of first kind, by Proposition \ref{dec5GAmma} we have $5\mathcal{O}_\Gamma = \mathcal{P}^5$. Hence $5\mathcal{O}_k = \mathcal{L}^{20}$.
\item[-]Suppose that $\Gamma$ is of second kind, by Proposition \ref{dec5GAmma} we have $5\mathcal{O}_\Gamma = \mathcal{P}_1^4\mathcal{P}_2$. It follows that,\\ $5\mathcal{O}_k = \mathcal{L}_1^4\mathcal{L}_2^4\mathcal{L}_3^4\mathcal{L}_4^4\mathcal{L}_5^4$.
\end{proof}
However, we have the following proposition in which we characterize the decomposition into prime ideals over $p\,\neq\,5$ in $k$.
\begin{proposition}\label{propk}
Using the same notations as above, we have :
\item[$(1)$] If $p$ divides $disc(\Gamma/\mathbb{Q})$ then: 
\begin{itemize}
\item[(a)]If $p \, \equiv \, \pm2\, (\mathrm{mod}\, 5)$, then $p\mathcal{O}_k = \mathcal{L}^5$.
\item[(b)] If $p \, \equiv \, -1\, (\mathrm{mod}\, 5)$, then $p\mathcal{O}_k = \mathcal{L}_1^5\mathcal{L}_2^5$.
\item[(c)] If $p \, \equiv \, 1\, (\mathrm{mod}\, 5)$, then $p\mathcal{O}_k = \mathcal{L}_1^5\mathcal{L}_2^5\mathcal{L}_3^5\mathcal{L}_4^5$.
\end{itemize}

\item[$(2)$] If $p$ does not divide $disc(\Gamma/\mathbb{Q})$  and $p \, \equiv \, 1\, (\mathrm{mod}\, 5)$ then: 
\begin{itemize}
\item[(a)] $p$ decomposes completely in $k$ if and only if $disc(\Gamma/\mathbb{Q})$ is a quintic residue modulo $p$.
\item[(b)] $p\mathcal{O}_k = \mathcal{L}_1\mathcal{L}_2\mathcal{L}_3\mathcal{L}_4$ if and only if $disc(\Gamma/\mathbb{Q})$ is not a quintic residue modulo $p$.
\end{itemize}

\item[$(3)$] If $p$ does not divide $disc(\Gamma/\mathbb{Q})$  and $p \, \equiv \, \pm2\, (\mathrm{mod}\, 5)$ then: $p\mathcal{O}_k = \mathcal{L}_1\mathcal{L}_2\mathcal{L}_3\mathcal{L}_4\mathcal{L}_5$.

\item[$(4)$] If $p$ does not divide $disc(\Gamma/\mathbb{Q})$  and $p \, \equiv \, -1\, (\mathrm{mod}\, 5)$ then: $p\mathcal{O}_k = \mathcal{L}_1\mathcal{L}_2\mathcal{L}_3\mathcal{L}_4\mathcal{L}_5\mathcal{L}_6\mathcal{L}_7\mathcal{L}_8\mathcal{L}_9\mathcal{L}_{10}$.
\end{proposition}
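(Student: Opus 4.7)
The plan is to derive the factorization of $p$ in $k$ by exploiting the Kummer structure $k = k_0(\sqrt[5]{n})$ of degree $5$ over $k_0$: for every $p \neq 5$, first factor $p\mathcal{O}_{k_0}$ using Proposition \ref{corocyc}, then apply the Kummer criterion of Proposition \ref{prKummer} (with $L = k_0$ and $\theta = n$) to each prime $\pi$ of $k_0$ above $p$ individually.

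For case (1), the hypothesis $p \mid \mathcal{D}_\Gamma$ is equivalent to $p \mid n$, and the valuation $a = v_\pi(n)$ equals $v_p(n) \in \{1,2,3,4\}$ because $p \neq 5$ is unramified in $k_0/\mathbb{Q}$ and $n$ is $5$-th power free. Since $5 \nmid a$, the third bullet of Proposition \ref{prKummer}(1) makes $\pi$ totally ramified in $k/k_0$, so $\pi\mathcal{O}_k = \mathcal{L}^5$. The three subcases (a)--(c) then follow by multiplying this ramification index by the number of primes $\pi$ of $k_0$ above $p$ provided by Proposition \ref{corocyc}, namely $1$, $2$, or $4$ according as $p \equiv \pm 2, -1, 1 \pmod{5}$.

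For cases (2)--(4) we have $p \nmid n$, so $a = 0$ and Proposition \ref{prKummer}(1) says that each $\pi$ above $p$ splits completely in $k/k_0$ exactly when $n$ is a fifth power in the residue field $\mathbb{F}_{p^f}$, with $f = 1, 2, 4$ for $p \equiv 1, -1, \pm 2 \pmod{5}$ respectively, and is inert otherwise. In cases (3) and (4), $p \not\equiv 1 \pmod{5}$, so $\gcd(5, p-1) = 1$; the fifth-power map is thus a bijection of $\mathbb{F}_p^*$, making every integer coprime to $p$ (in particular $n$) automatically a fifth power in $\mathbb{F}_p \subseteq \mathbb{F}_{p^f}$, and hence every $\pi$ splits completely in $k/k_0$. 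In case (2), with $f = 1$ and $5 \mid p-1$, the condition is genuine, producing the bifurcation (a) versus (b); the equivalence ``$n$ is a quintic residue mod $p$'' $\Leftrightarrow$ ``$\mathcal{D}_\Gamma$ is a quintic residue mod $p$'' follows because $\operatorname{disc}(\Gamma)$ equals $n^4$ up to a factor of the form $5^c$ by Theorem \ref{fieldKind}, and both $5^c$ and the exponent $4$ are invertible modulo $5$.

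The only genuinely delicate step is the case (2) identification between the quintic residue classes of $n$ and $\mathcal{D}_\Gamma$ modulo $p$, which requires the explicit shape of the discriminant of a pure quintic field. The remainder of the proof is a straightforward combination of Propositions \ref{prKummer} and \ref{corocyc}, with the identity $\sum_i e_i f_i = [k : \mathbb{Q}] = 20$ serving as a uniform consistency check on each sub-formula.
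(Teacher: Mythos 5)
Your strategy---factor $p$ in $k_0$ via Proposition \ref{corocyc} and then apply the Kummer criterion of Proposition \ref{prKummer} to each prime $\pi$ of $k_0$ above $p$ in the degree-$5$ extension $k/k_0$---is sound, and it is genuinely different from the paper's proof, which instead juxtaposes the factorization in $\Gamma$ from Proposition \ref{ramifGamm} with the factorization in $k_0$ and reads off the answer (in effect multiplying the two prime counts). Your route is the more reliable one, but carried out honestly it does not prove the statement as written: in cases (3) and (4) your own argument shows that every $\pi$ above $p$ splits completely in $k/k_0$, which gives $2\times 5=10$ primes when $p\equiv -1\pmod 5$ and $1\times 5=5$ primes when $p\equiv\pm 2\pmod 5$, not the $6$ and $2$ asserted. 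The asserted counts are in fact impossible: $k/\mathbb{Q}$ is Galois of degree $20$, so for unramified $p$ the number of primes is $20/f$ with $f$ the order of the Frobenius in $\operatorname{Gal}(k/\mathbb{Q})\cong\mathbb{Z}/5\mathbb{Z}\rtimes\mathbb{Z}/4\mathbb{Z}$, whose elements have order $1$, $2$, $4$ or $5$; hence $g\in\{4,5,10,20\}$, while $6$ does not divide $20$ and $g=2$ would force $f=10$. You assert that $\sum_i e_if_i=20$ ``serves as a uniform consistency check on each sub-formula,'' but that check visibly fails for (3) and (4) as stated; you needed either to flag the discrepancy or to correct the counts, and as submitted the proposal silently establishes a different statement.

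A second, smaller gap is your case (2) identification of ``$n$ is a quintic residue mod $p$'' with ``$\mathcal{D}_\Gamma$ is a quintic residue mod $p$.'' Theorem \ref{fieldKind} concerns the conductor of $k/k_0$ and involves $R$, the squarefree radical of $n$, not $n$ itself; correspondingly the discriminant of $\Gamma$ is, up to a power of $5$, a fourth power of $R$ rather than of $n$. Since $n$ need not be squarefree, $R$ and $n$ can lie in different quintic residue classes modulo $p$, and the power of $5$ need not be a quintic residue either, so the equivalence does not follow from what you cite. The condition that actually governs the splitting via Proposition \ref{prKummer} (equivalently via Proposition \ref{ramifGamm}) is that $n$ itself is a fifth power modulo $p$; the paper's own proof makes the same silent substitution of $\mathcal{D}_\Gamma$ for $n$, so this is partly a defect of the statement, but your proof should have worked with $n$ (or the radicand) and noted that the discriminant formulation requires an extra, and generally false, equivalence.
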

\begin{proof}
\item[$(1)$] We use 1 of Proposition \ref{ramifGamm} and the decomposition of prime ideals in the cyclotomic
field $k_0 =  \mathbb{Q}(\zeta_5)$.

\item[$(2)$] Suppose that $p$ does not divide $disc(\Gamma/\mathbb{Q})$ and $p \,\equiv \,1\, (\mathrm{mod}\, 5)$.
\begin{itemize}
\item[(a)] If $disc(\Gamma/\mathbb{Q})$ is a quintic residue modulo $p$, then by 4 of proposition \ref{ramifGamm} we have $p$ splits completely in $\Gamma$, and also in $k_0$. Hence $p$ splits completely in $k$.
\item[(b)] If $disc(\Gamma/\mathbb{Q})$ is not a quintic residue modulo $p$, we have $p$ remains inert in $\Gamma$. Hence $p\mathcal{O}_k = \mathcal{L}_1\mathcal{L}_2\mathcal{L}_3\mathcal{L}_4$.
\end{itemize}

\item[$(3)$] Since $p\nmid disc(\Gamma/\mathbb{Q})$ we have that $p$ is unramified in $\Gamma$, and also in $k_0$, then $p$ is unramified in $k$. Since $k/\mathbb{Q}$ is Galois extension,
if $p\mathcal{O}_k = \mathcal{L}_1....\mathcal{L}_r$ then $rf = 20$, with $f = [\mathcal{O}_k/\mathcal{L}_i : \mathbb{F}_p]$ the inertia degree. It is known that $f$ is multiplicative in towers of number fields, which implies that $f$ is a multiple of $4$ if $p \,\equiv \,\pm2\, (\mathrm{mod}\, 5)$, then $f = 4$ and $r = 5$. Otherwise $f = 20$ and $r = 1$, which means that $p$ remains inert in $k$, that is impossible because according to Proposition \ref{dec5GAmma}, $p$ decomposes in $\Gamma$ as $p\mathcal{O}_\Gamma = \mathcal{P}_1\mathcal{P}_2$.

\item[$(4)$]As the previous point, if $p \,\equiv \,-1\, (\mathrm{mod}\, 5)$ we have that  $f$ is a multiple of $2$, then $f = 2$ and $r = 10$. Otherwise $f = 10$ and $r = 2$, which means that $p\mathcal{O}_k = \mathcal{L}_1\mathcal{L}_2$. Since $p\mathcal{O}_\Gamma = \mathcal{P}_1\mathcal{P}_2\mathcal{P}_3$ and each $\mathcal{P}_i$ is unramified in $k$, furthermore $k/\Gamma$ is Galois, then $r_{\mathcal{P}_i}f_{\mathcal{P}_i} = 4$. For all possible values of $r_{\mathcal{P}_i}$ and $f_{\mathcal{P}_i}$ we can not get $p\mathcal{O}_k = \mathcal{L}_1\mathcal{L}_2$. Hence $f = 2$ and $r = 10$. 
\end{proof}
\begin{remark}
Let $p\,\neq\,5$, according to Proposition \ref{propk}, if $p\,\nmid\,disc(\Gamma/\mathbb{Q})$, then $p$ is unramified in $k$ and not inert in $k$. Let $p\mathcal{O}_k = \mathcal{L}_1\mathcal{L}_2....\mathcal{L}_r$, since the elements of $\mathrm{Gal}(k/\mathbb{Q})$ permute the set $\{\mathcal{L}_1....\mathcal{L}_r\}$ then $\mathcal{L}_i^\sigma\,\neq\,\mathcal{L}_i$ for $1\leq i \leq r$. This result has great importance in the proof of our main theorem.
\end{remark}

\section{Norm residue symbol}
Let $L/K$ be an abelian extension of number fields with conductor $f$. For each finite or infinite prime ideal $\mathcal{P}$ of $K$, we note by $f_{\mathcal{P}}$ the largest power of $\mathcal{P}$ that divides $f$. Let
$\beta \in K^{*}$, we determine an auxiliary number $\beta_0$ by the two conditions $\beta_0\equiv\beta\,(\mathrm{mod}\, f_\mathcal{P})$ and $\beta_0\equiv1\,(\mathrm{mod}\, \frac{f}{f_\mathcal{P}})$. Let $\mathcal{Q}$ an ideal co-prime with $\mathcal{P}$ such that $(\beta_0) = \mathcal{P}^a\mathcal{Q}$ ($a=0$ if $\mathcal{P}$ infinite). We note by
\begin{center}
\Large{ $\left(\frac{\beta,L}{\mathcal{P}} \right) =  \left(\frac{L/K}{\mathcal{Q}} \right)$}
\end{center}
the Artin map in $L/K$ applied to $\mathcal{Q}$.\\
Let $K$ be a number field containing the $m^{th}$-roots of unity, where $m\in \mathbb{N}$, then for each $\alpha,\beta \in K^{*}$ and prime ideal $\mathcal{P}$ of $K$, we define the norm residue symbol by:
\begin{center}
\Large{ $\left(\frac{\beta,\alpha}{\mathcal{P}} \right)_m =  \frac{\left(\frac{\beta,K(\sqrt[m]{\alpha})}{\mathcal{P}} \right)\sqrt[m]{\alpha}}{\sqrt[m]{\alpha}}$}
\end{center}
Therefore, if the prime ideal $\mathcal{P}$ is unramified in the field $K(\sqrt[m]{\alpha})$, then we write:
\begin{center}
\Large{ $\left(\frac{\alpha}{\mathcal{P}} \right)_m =  \frac{\left(\frac{K(\sqrt[m]{\alpha})}{\mathcal{P}} \right)\sqrt[m]{\alpha}}{\sqrt[m]{\alpha}}$}
\end{center}
\begin{remark}
\large{Notice that $\left(\frac{\beta,\alpha}{\mathcal{P}} \right)_m$ and $\left(\frac{\alpha}{\mathcal{P}} \right)_m$} are two $m^{th}$-roots of unity.
\end{remark}
\large{Following \cite{Hass}, the principal properties of the norm residue symbol are given as follows:}
\begin{Properties}.\\\label{normprop}
\Large{\item[$(1)$] $\left(\frac{\beta_1\beta_2,\alpha}{\mathcal{P}} \right)_m = \left(\frac{\beta_1,\alpha}{\mathcal{P}} \right)_m\left(\frac{\beta_2,\alpha}{\mathcal{P}} \right)_m$};

\Large{\item[$(2)$] $\left(\frac{\beta,\alpha_1\alpha_2}{\mathcal{P}} \right)_m = \left(\frac{\beta,\alpha_1}{\mathcal{P}} \right)_m\left(\frac{\beta,\alpha_2}{\mathcal{P}} \right)_m$};

\Large{\item[$(3)$] $\left(\frac{\beta,\alpha}{\mathcal{P}} \right)_m = \left(\frac{\alpha,\beta}{\mathcal{P}} \right)_m^{-1}$};

\item[$(4)$] If $\mathcal{P}$ is not divisible by the conductor $f(\sqrt[m]{\alpha})$ of $K(\sqrt[m]{\alpha})$ and appears in $(\beta)$ with the exponent b, then: 
\Large{$\left(\frac{\beta,\alpha}{\mathcal{P}} \right)_m = \left(\frac{\alpha}{\mathcal{P}} \right)_m^{-b}$  };

\Large{\item[$(5)$] $\left(\frac{\beta,\alpha}{\mathcal{P}} \right)_m = 1$ if and only if $\beta$ is norm residue of $K(\sqrt[m]{\alpha})$ modulo $f(\sqrt[m]{\alpha})$  };

\Large{\item[$(6)$] $\left(\frac{\tau\beta,\tau\alpha}{\tau\mathcal{P}} \right)_m = \tau\left(\frac{\beta,\alpha}{\mathcal{P}} \right)_m$ for each automorphism $\tau$ of $K$ };

\Large{\item[$(7)$] ${\displaystyle \prod_{\mathcal{P}} \left(\frac{\beta,\alpha}{\mathcal{P}} \right)_m} = 1$} for all finite or infinite prime ideals;

\item[$(8)$]If $K'$ is a finite extension of $K$, $\alpha \in K^{*},\beta' \in K'$ then: 
\begin{center}
\Large{${\displaystyle \prod_{\mathcal{P'}|\mathcal{P}} \left(\frac{\beta',\alpha}{\mathcal{P'}} \right)_m} =  \left(\frac{\mathcal{N}_{K'/K}(\beta'),\alpha}{\mathcal{P}} \right)_m$}
\end{center}

\item[$(9)$]Let $\alpha,\beta \in K^{*}$ and the conductors $f(\sqrt[m]{\alpha})$, $f(\sqrt[m]{\beta})$ of respectively $K(\sqrt[m]{\alpha})$, $K(\sqrt[m]{\beta})$ are co-prime then, the classical reciprocity law:
\begin{center}
\Large{$\left(\frac{\beta}{(\alpha)} \right)_m = \left(\frac{\alpha}{(\beta)} \right)_m$}
\end{center}

\end{Properties}

\large{For more basic properties of the quintic norm residue symbols in the number fields, we refer the
reader to \cite{Hass}.\\
Notice that in the rest of the article, we will use the quintic norm residue symbol $(m = 5)$. As the ring of integer $\mathcal{O}_{k_0}$ is principal, we will write the norm quintic residue symbol as follows:}
\begin{center}
\Large{$\left(\frac{\beta,\alpha}{(\pi)} \right)_5 = \left(\frac{\beta,\alpha}{\pi} \right)_5$ and $\left(\frac{\alpha}{(\pi)} \right)_5 = \left(\frac{\alpha}{\pi} \right)_5$}
\end{center}
Where $\alpha,\beta \in k_0^{*}$ and $\pi$ is a prime integer of $\mathcal{O}_{k_0}$.

\section{\Large Fields $\mathbb{Q}(\sqrt[5]{n},\zeta_5)$ whose $5$-class group is of type $(5,5)$}
In this section we use the class number formula of [\cite{Pa}] to give  a necessary and sufficient condition such that the $5$-class group of the fields $\mathbb{Q}(\sqrt[5]{n},\zeta_5)$ is of type $(5,5)$.\\
\begin{lemma}\label{C+C-}
Let $\omega \in \mathrm{Gal}(k/\Gamma) = \langle\tau\rangle\,\simeq \,\mathbb{Z}/4\mathbb{Z}$, and $C$ be a $\mathbb{Z}_5[\langle\tau\rangle]$ module. Let $C^+ = \{\mathcal{A}\in C\,|\, \mathcal{A}^{\omega} = \mathcal{A} \}$ and $C^- = \{\mathcal{A}\in C\,|\, \mathcal{A}^{\omega} = \mathcal{A}^{-1}\}$. Then
\begin{center}
$C\,\cong\, C^+\times C^-$.
\end{center}
\end{lemma}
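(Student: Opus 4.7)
The plan is to exhibit an explicit idempotent decomposition of $C$ using the action of $\tau^2$, exploiting the fact that $2$ is a unit in $\mathbb{Z}_5$.

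First I would note that, since $\tau^4 = \mathrm{id}$, the element $\tau^2$ satisfies $(\tau^2)^2 = 1$, so its minimal polynomial divides $X^2 - 1 = (X-1)(X+1)$. Because $2 \in \mathbb{Z}_5^{\times}$ (as $\gcd(2,5)=1$), I can form two elements of the group algebra $\mathbb{Z}_5[\langle\tau\rangle]$:
\begin{center}
$e_{+} = \dfrac{1+\tau^2}{2}$ and $e_{-} = \dfrac{1-\tau^2}{2}.$
\end{center}

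Next I would verify the idempotent relations by a direct computation: $e_{+} + e_{-} = 1$; $e_{+}\,e_{-} = \frac{1-\tau^4}{4} = 0$; and $e_{\pm}^2 = e_{\pm}$ using $\tau^4 = \mathrm{id}$. From $e_{+} + e_{-} = 1$ and $e_{+}e_{-}=0$, it follows formally that for any $\mathbb{Z}_5[\langle\tau\rangle]$-module $C$ one has a direct product decomposition $C = e_{+}C \cdot e_{-}C$ with trivial intersection (in multiplicative notation), i.e.\ $C \cong e_{+}C \times e_{-}C$.

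The remaining step is to identify $e_{+}C = C^{+}$ and $e_{-}C = C^{-}$. For one inclusion, given $\mathcal{A} \in C$, applying $\tau^2$ to $e_{+}\mathcal{A}$ gives $\tau^2 \cdot e_{+}\mathcal{A} = \frac{\tau^2 + \tau^4}{2}\mathcal{A} = e_{+}\mathcal{A}$, so $e_{+}C \subseteq C^{+}$; similarly $\tau^2 \cdot e_{-}\mathcal{A} = \frac{\tau^2 - 1}{2}\mathcal{A} = -e_{-}\mathcal{A} = (e_{-}\mathcal{A})^{-1}$, giving $e_{-}C \subseteq C^{-}$. For the reverse inclusion, if $\mathcal{A} \in C^{+}$ then $\mathcal{A}^{1+\tau^2} = \mathcal{A}^{2}$, hence $e_{+}\mathcal{A} = \mathcal{A}$; and if $\mathcal{A} \in C^{-}$ then $\mathcal{A}^{1-\tau^2} = \mathcal{A}^{2}$, so $e_{-}\mathcal{A} = \mathcal{A}$. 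Here the crucial point — and essentially the only one that required the hypothesis $5 \neq 2$ — is that the exponent $\frac{1}{2}$ makes sense on $C$, because $C$ is a $\mathbb{Z}_5$-module and $2$ is a unit in $\mathbb{Z}_5$.

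There is no real obstacle; the lemma is a standard consequence of the semisimplicity of $\mathbb{Z}_5[\mathbb{Z}/2\mathbb{Z}]$. The only care needed is the bookkeeping between additive notation (in the group ring) and multiplicative notation (in the class group $C$), and keeping the sign conventions in $C^{+}$ versus $C^{-}$ straight when applying the idempotents.
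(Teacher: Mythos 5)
Your proposal is correct and follows essentially the same route as the paper: both decompose $\mathcal{A}$ as $\mathcal{A}^{\frac{1+\tau^2}{2}}\cdot\mathcal{A}^{\frac{1-\tau^2}{2}}$ using the invertibility of $2$ in $\mathbb{Z}_5$, and both check that the two factors land in $C^+$ and $C^-$ with trivial overlap (the paper verifies $C^+\cap C^-=1$ directly from $\mathcal{A}^2=1$, which is just your relation $e_+e_-=0$ in disguise). Your more systematic idempotent bookkeeping is fine but adds nothing beyond the paper's argument.
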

\begin{proof}
Let $\mathcal{A} \in C$. Write $\mathcal{A} = \mathcal{A} ^{\frac{1+\tau^2}{2}}.\mathcal{A}^{\frac{1-\tau^2}{2}}$. Then $\mathcal{A}^{\frac{1+\tau^2}{2}}\in C^+$ and $\mathcal{A}^{\frac{1-\tau^2}{2}}\in C^-$.\\ 
Let $\mathcal{A} \in C^+ \cap C^-$, then there are $\omega_1, \, \omega_2 \in \mathrm{Gal}(k/\Gamma)$ such that $\mathcal{A}^{\omega_1} = \mathcal{A}$ and $\mathcal{A}^{\omega_2} = \mathcal{A}^{-1}$. If $\omega_1 = \omega_2$ then we have $\mathcal{A} = \mathcal{A}^{-1}$ that is $\mathcal{A}^2 = 1$. Thus $\mathcal{A} = 1$, since $C$ is $\mathbb{Z}_5[<\tau>]$-module. If $\omega_1 \neq \omega_2$ then by treatment of all possible cases, its easy to show that $\mathcal{A} = 1$. Hence $C$ $\cong$ $C^+ \times C^-$.
\end{proof}
\begin{lemma}\label{Gamma C+}
$C_{\Gamma,5}\,\cong\, C_{k,5}^{+}$.
\end{lemma}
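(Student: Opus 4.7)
The plan is to realize $C_{\Gamma,5}\cong C_{k,5}^{+}$ through the natural ideal-lift map $j\colon C_{\Gamma,5}\to C_{k,5}$, $[\mathfrak{a}]\mapsto [\mathfrak{a}\mathcal{O}_k]$. First I would show that $j$ lands in $C_{k,5}^{+}$: because $\Gamma=k^{\langle\tau\rangle}$, any ideal of $\mathcal{O}_{\Gamma}$ is stabilized by $\tau$ (hence by $\tau^{2}$), which forces $j([\mathfrak{a}])^{\tau^{2}}=j([\mathfrak{a}])$. Injectivity of $j$ follows at once from the standard identity $N_{k/\Gamma}\circ j=[k:\Gamma]\cdot \mathrm{id}=4\cdot\mathrm{id}$, which is an automorphism of the $5$-primary group $C_{\Gamma,5}$ since $\gcd(4,5)=1$.

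For surjectivity onto $C_{k,5}^{+}$ I would exploit the intermediate field $F=k^{\langle\tau^{2}\rangle}=\Gamma(\sqrt{5})$, which fits in the tower $\Gamma\subset F\subset k$ with $[F:\Gamma]=[k:F]=2$. The standard lifting isomorphism for Galois extensions of degree prime to $5$, applied to $k/F$, identifies $C_{F,5}$ with $C_{k,5}^{\langle\tau^{2}\rangle}=C_{k,5}^{+}$; applied to $F/\Gamma$ it realizes $C_{\Gamma,5}$ as the subgroup $C_{F,5}^{\operatorname{Gal}(F/\Gamma)}\subseteq C_{F,5}$. The lemma therefore reduces to the equality $C_{F,5}^{\operatorname{Gal}(F/\Gamma)}=C_{F,5}$, i.e.\ to showing that $\operatorname{Gal}(F/\Gamma)$ acts trivially on $C_{F,5}$, equivalently that the $\tau=-1$ eigenspace inside $C_{k,5}^{+}$ vanishes.

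The main obstacle is this final triviality, which I expect to establish by a class-number comparison. Chevalley's ambiguous class number formula for the quadratic extension $F/\Gamma$ involves the degree $2$, ramification indices at most $2$, and a unit index dividing $2$; its $5$-part is therefore trivial and yields $|C_{F,5}^{\operatorname{Gal}(F/\Gamma)}|=|C_{\Gamma,5}|$. Combining this with the class number formula of [\ref{Pa}] in the tower $\Gamma\subset F\subset k$, which relates $h_{k,5}$ to $h_{\Gamma,5}$, $h_{F,5}$, $h_{k_{0},5}$, and the unit index $u$, forces $h_{F,5}=h_{\Gamma,5}$. The injective lift $C_{\Gamma,5}\hookrightarrow C_{F,5}$ is then an equality by cardinality, and composing with $C_{F,5}\cong C_{k,5}^{+}$ finishes the proof.
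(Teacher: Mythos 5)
Your reduction is set up correctly: the lift $j\colon C_{\Gamma,5}\to C_{k,5}$ is injective because $N_{k/\Gamma}\circ j=4$ is invertible on a $5$-group; $F=k^{\langle\tau^{2}\rangle}=\Gamma(\sqrt{5})$ does satisfy $C_{F,5}\cong C_{k,5}^{+}$ by the standard lifting isomorphism for the degree-$2$ extension $k/F$; and Chevalley's formula for the quadratic extension $F/\Gamma$ does give $|C_{F,5}^{\mathrm{Gal}(F/\Gamma)}|=|C_{\Gamma,5}|$, since the degree, the ramification product and the unit norm index are all powers of $2$. You have thereby correctly isolated the real content of the lemma: the vanishing of the minus eigenspace of $C_{F,5}$ under $\mathrm{Gal}(F/\Gamma)$, equivalently of the eigenspace of $C_{k,5}$ on which $\tau$ acts by $-1$. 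This is not a formal fact (for a general quadratic extension the minus part of a $5$-class group certainly need not vanish), so it requires a genuine arithmetic input about the specific field $F=\Gamma(\sqrt{5})$.

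That input is exactly what is missing. The step ``combining this with the class number formula of [\ref{Pa}] \dots forces $h_{F,5}=h_{\Gamma,5}$'' is asserted, not derived. The relation of Parry actually used in this paper, $h_{k}=\bigl(\frac{u}{5}\bigr)\bigl(\frac{h_{\Gamma}}{5}\bigr)^{4}$, involves only $h_{k}$, $h_{\Gamma}$ and the unit index $u$; it says nothing about $h_{F}$. Writing $|C_{k,5}|=|C_{k,5}^{+}|\cdot|C_{k,5}^{-}|=|C_{F,5}|\cdot|C_{k,5}^{-}|$ and substituting Parry's relation leaves the two unknowns $|C_{F,5}|$ and $|C_{k,5}^{-}|$ entangled, so no conclusion about $h_{F,5}$ follows without a second, independent class number relation for the degree-$10$ field $F$ (such a relation exists in Parry's paper, but it carries its own unit index, which you would then have to evaluate). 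For comparison, the paper's own proof is a short direct argument: it obtains surjectivity by writing $\mathcal{A}=(1+\tau+\tau^{2}+\tau^{3})\bigl(\frac{1}{4}\mathcal{A}\bigr)=\mathcal{N}_{k/\Gamma}\bigl(\frac{1}{4}\mathcal{A}\bigr)$ for $\mathcal{A}\in C_{k,5}^{+}$ --- note, however, that on $C_{k,5}^{+}$ the operator $1+\tau+\tau^{2}+\tau^{3}$ equals $2(1+\tau)$, which is multiplication by $4$ only when $\tau$ itself acts trivially there, i.e.\ only when the minus eigenspace you are trying to kill is already trivial. So your route makes explicit precisely the point that the paper's argument passes over, but as written it does not close it.
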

\begin{proof}
We admit the same proof as [\cite{Mani}, Lemma 6.2].
\end{proof}

Now let $u$ be the index of the subgroup $E_0$ generated by the units of intermediate fields of the extension $k/\mathbb{Q}$ in the unit group of $k$. In [\cite{Pa}], C.Parry proved that $u$ is a divisor of $5^6$, and he presented the relation formula between the class numbers of $k$ and $\Gamma$ as follows: $h_{k} = (\frac{u}{5})(\frac{h_\Gamma}{5})^{4}$. The structure of the $5$-class group $C_{k,5}$ is given by the following proposition:

\begin{proposition}\label{prop55} Let $\Gamma$ be a pure quintic field, $k$ its normal closure, and $u$ be the index of units defined above, then

\item \hspace{3cm}$C_{k,5} \simeq \mathbb{Z}/5\mathbb{Z} \times \mathbb{Z}/5\mathbb{Z} \, \Longleftrightarrow\, h_\Gamma$ is exactly divisible by $5$ and $u  =  5^3$.

\end{proposition}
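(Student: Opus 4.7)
The plan is to use Parry's class number formula $h_k = (u/5)(h_\Gamma/5)^4$ together with the direct-sum decomposition $C_{k,5} \cong C_{k,5}^+ \times C_{k,5}^-$ and the identification $C_{k,5}^+ \cong C_{\Gamma,5}$ from Lemma \ref{Gamma C+}. The key preliminary step is to write $u = 5^a$ (with $0 \le a \le 6$, by Parry's divisibility $u \mid 5^6$) and $h_\Gamma = 5^b m$ with $\gcd(m,5) = 1$ and $b \ge 0$. Rearranging Parry's formula as $5^5 h_k = u\, h_\Gamma^4$ and comparing $5$-adic valuations then yields
\[
|C_{k,5}| \;=\; 5^{a + 4b - 5}.
\]

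For the forward implication, I would assume $C_{k,5} \cong \mathbb{Z}/5\mathbb{Z} \times \mathbb{Z}/5\mathbb{Z}$, so $|C_{k,5}| = 5^2$, and this forces the Diophantine condition $a + 4b = 7$ under the constraints $0 \le a \le 6$ and $b \ge 0$. The unique integer solution is $(a,b) = (3,1)$: indeed, $b=0$ would require $a=7$, violating Parry's upper bound, while $b \ge 2$ would make $a$ negative. This delivers $u = 5^3$ and $5 \| h_\Gamma$ simultaneously.

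For the reverse implication, I would assume $u = 5^3$ and $5 \| h_\Gamma$, so that $|C_{k,5}| = 5^{3 + 4 - 5} = 25$. By Lemma \ref{Gamma C+} one has $|C_{k,5}^+| = |C_{\Gamma,5}| = 5$, and the decomposition $C_{k,5} \cong C_{k,5}^+ \times C_{k,5}^-$ then forces $|C_{k,5}^-| = 5$. Since both eigenspaces are cyclic of order $5$, their direct product has exponent $5$ and order $25$, hence $C_{k,5} \cong \mathbb{Z}/5\mathbb{Z} \times \mathbb{Z}/5\mathbb{Z}$; in particular the cyclic alternative $\mathbb{Z}/25\mathbb{Z}$ is excluded precisely by the existence of the $\tau^2$-eigenspace splitting.

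No deep obstacle arises once Parry's formula and the preceding decomposition lemma are accepted: the proof reduces to elementary $5$-adic counting. The two delicate points to flag in the write-up are that the bound $u \mid 5^6$ is essential in order to rule out the spurious solution $(a,b) = (7,0)$ in the forward direction, and that the $(\pm)$-eigenspace splitting is what precludes the cyclic structure $\mathbb{Z}/25\mathbb{Z}$ in the reverse direction.
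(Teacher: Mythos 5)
Your proposal is correct and follows essentially the same route as the paper: Parry's formula plus $5$-adic valuation counting for the forward direction, and the $\tau^2$-eigenspace splitting $C_{k,5}\cong C_{k,5}^+\times C_{k,5}^-$ with $C_{k,5}^+\cong C_{\Gamma,5}$ for the converse. Your explicit use of the bound $u\mid 5^6$ to exclude the solution $(a,b)=(7,0)$ is a small but worthwhile refinement, since the paper only asserts the uniqueness of $(3,1)$ without spelling out why $n=7$, $n'=0$ is impossible.
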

\begin{proof}
\item Let $C_{k,5} \simeq \mathbb{Z}/5\mathbb{Z} \times \mathbb{Z}/5\mathbb{Z}$ then $|C_{k,5}| = 25$. According to [\cite{Pa}]  $|C_{k,5}|\, = \, (\frac{u}{5})\frac{|C_{\Gamma,5}|^{4}}{5^4}\, = \, 25$, namly $uh_\Gamma^4 = 5^7$. Let $n = v_5(u)$, and $n' = v_5(h_\Gamma)$, so we get $n+4n' = 7$, and the unique natural values of $n$ and $n'$ which satisfy this equation are $n = 3$ and $n' = 1$. Therfore we have $u = 5^3$ and $h_\Gamma$ is exactly divisible by $5$.\\ 
Conversely according to Lemma \ref{C+C-} and Lemma \ref{Gamma C+}, $C_{k,5}\,\cong \,C_{k,5}^{+}\times C_{k,5}^{-}$ and $C_{k,5}^{+}\,\cong\, C_{\Gamma,5}$, then $C_{k,5}^{+}$ is cyclic of order $5$ because $|C_{k,5}^{+}| = |C_{\Gamma,5}| =  5$. Since $|C_{k,5}| = |C_{k,5}^{+}|.|C_{k,5}^{-}| =  25$ by the formula $|C_{k,5}|\, = \, (\frac{u}{5})\frac{|C_{\Gamma,5}|^{4}}{5^4}$, we get that $|C_{k,5}^{-}| =  5$ and $C_{k,5}^{-}$ is a cyclic group of order $5$, then $C_{k,5} \,\simeq \, \mathbb{Z}/5\mathbb{Z} \times \mathbb{Z}/5\mathbb{Z}$.
\end{proof}

\section{\Large Proof of the Main Theorem}
We start the proof by some results as follows.
\begin{proposition}\label{propprof}
Let $p$ a prime number such that $p\equiv-1\, (\mathrm{mod}\, 5)$ and $\mathrm{Gal}(k/\Gamma) = \langle\tau\rangle$, then we have:
\begin{itemize}
\item[$(1)$] $p\,=\pi_1\pi_2$, where $\pi_i$ are primes of $k_0$ and $\pi_1^{\tau} = -\pi_2$,  $\pi_2^{\tau} = -\pi_1$
\item[$(2)$] \large{$\left(\frac{c}{\pi_1} \right)_5 = \left(\frac{c}{\pi_2} \right)_5^2$}  for all $c \in \mathbb{Z}$ such that $p\nmid c$.
\item[$(3)$] \large{$\left(\frac{c}{\pi_1} \right)_5 = \left(\frac{c}{\pi_2} \right)_5 = 1$} if and only if $c$ is a quintic residue modulo $p$.
\item[$(4)$] \large{$\left(\frac{\pi_2}{\pi_1} \right)_5 = \left(\frac{\pi_1}{\pi_2} \right)_5 = 1$}.
\end{itemize}
\end{proposition}
\begin{proof}
\begin{itemize}
\item[$(1)$]According to [\cite{Bahmanpour}, lemma 3.1] the prime $p\equiv-1\, (\mathrm{mod}\, 5)$, can be written as:\\ $p = a^2+ab-b^2$, with $a,b\in \mathbb{Z}$ co-prime. Using this expression of $p$ we define:
\begin{center}
$\pi_1 = a\zeta_5^3+a\zeta_5^2+b$\hspace{1cm} and \hspace{1cm} $\pi_2 = a\zeta_5^3+a\zeta_5^2+a-b$, 
\end{center}
then $p = \pi_1\pi_2$ (see [\cite{Mani}, theorem 5.15]). Let $\mathrm{Gal}(k/\Gamma) = \langle\tau\rangle$ with $\tau:\,\zeta_5\longrightarrow\zeta_5^2$, so we get that:\\
$\pi_1^{\tau} = (a\zeta_5^3+a\zeta_5^2+b)^\tau = a\zeta_5+a\zeta_5^4+b = a(\zeta_5+\zeta_5^4)+b = a(-1-\zeta_5^2-\zeta_5^3)+b = -\pi_2$.\\
$\pi_2^{\tau} = (a\zeta_5^3+a\zeta_5^2+a-b)^\tau = a\zeta_5+a\zeta_5^4+a-b = a(\zeta_5+\zeta_5^4)+a-b = a(-1-\zeta_5^2-\zeta_5^3)+a-b = -\pi_1$.
\item[$(2)$] \large{$\left(\frac{c}{\pi_1} \right)_5 = \left(\frac{\pi_1,c}{\pi_1} \right)_5^{-1} = \left(\frac{c,\pi_1}{\pi_1} \right)_5 = \left(\frac{c,-\pi_2^\tau}{-\pi_2^\tau} \right)_5 = \tau\left(\frac{c,\pi_2}{\pi_2} \right)_5 = \left(\frac{c,\pi_2}{\pi_2} \right)_5^2 = \left(\frac{\pi_2,c}{\pi_2} \right)_5^{-2} = \left(\frac{c}{\pi_2} \right)_5^2$}, because $\pi_1$, $\pi_2$ are unramified in $k_0(\sqrt[5]{c})$ (proposition \ref{prKummer}), and we use properties $(3),(4), (6)$ of \ref{normprop}.
\item[$(3)$]Let $\omega \in \mathcal{O}_{k_0}$ and $\pi$ a prime element of $\mathcal{O}_{k_0}$, such that $\pi \nmid\omega$, then $X^5\equiv\omega\,(\mathrm{mod\,\pi})$ is soluble in $\mathcal{O}_{k_0}$ if and only if $\omega^m\equiv 1\,(\mathrm{mod\,\pi})$, when $m = \frac{\mathcal{N}(\pi)-1}{5}$. If \large{$\left(\frac{c}{\pi_1} \right)_5 = \left(\frac{c}{\pi_2} \right)_5 = 1$}, we get that $\pi_1$ and $\pi_2$ split completely in $k_0(\sqrt[5]{c})$, so the equations $X^5\equiv c\,(\mathrm{mod\,\pi_1})$ and $X^5\equiv c\,(\mathrm{mod\,\pi_2})$ are soluble in $\mathcal{O}_{k_0}$, therefore $c^{\frac{p^2-1}{5}}\equiv 1\,(\mathrm{mod\,\pi_1})$ and $c^{\frac{p^2-1}{5}}\equiv 1\,(\mathrm{mod\,\pi_2})$, thus $c^{\frac{p^2-1}{5}}\equiv 1\,(\mathrm{mod\,p})$, and by Euler's citernion, $c$ is a quintic residue modulo $p$. Conversely if $c$ is rest quintic modulo $p$ we have $c^{\frac{p^2-1}{5}}\equiv 1\,(\mathrm{mod\,p})$, so $c^{\frac{p^2-1}{5}}\equiv 1\,(\mathrm{mod\,\pi_1})$ and $c^{\frac{p^2-1}{5}}\equiv 1\,(\mathrm{mod\,\pi_2})$, therefore the equations $X^5\equiv c\,(\mathrm{mod\,\pi_1})$ and $X^5\equiv c\,(\mathrm{mod\,\pi_2})$ are soluble in $\mathcal{O}_{k_0}$, namly \large{$\left(\frac{c}{\pi_1} \right)_5 = \left(\frac{c}{\pi_2} \right)_5 = 1$}.
\item[$(4)$] Accoding to [\cite{Mani}, theorem 5.15] we have $A = $\large{$\left(\frac{\pi_1^{a_1}\pi_2^{a_2},\pi_1,\pi_2}{\pi_1} \right)_5 = 1$} with $a_1, a_2 \in \{1,2,3,4\}$ and $a_1 \neq a_2$, so using properties \ref{normprop} we have 
\begin{center}
$A$  = \large{$\left(\frac{\pi_1,\pi_1}{\pi_1}\right)_5^{a_1}.\left(\frac{\pi_1,\pi_2}{\pi_1}\right)_5^{a_1}$.$\left(\frac{\pi_2,\pi_1}{\pi_1}\right)_5^{a_2}$.$\left(\frac{\pi_2,\pi_2}{\pi_1}\right)_5^{a_2}$} = $1$
\end{center}
then
\begin{itemize}
\item[-] \large{$\left(\frac{\pi_1,\pi_1}{\pi_1}\right)_5^{a_1} = 1$}, because $\pi_1$ is norm in $k_0(\sqrt[5]{\pi_1})/k_0$.

\item[-] \large{$\left(\frac{\pi_1,\pi_2}{\pi_1}\right)_5^{a_1} = \left(\frac{\pi_2}{\pi_1}\right)_5^{-a_1}$} and \large{$\left(\frac{\pi_2,\pi_1}{\pi_1}\right)_5^{a_2} = \left(\frac{\pi_2}{\pi_1}\right)_5^{a_2}$}, by $(4)$ of properties \ref{normprop}

\item[-] \large{$\left(\frac{\pi_2,\pi_2}{\pi_1}\right)_5^{a_2} = 1$}, by $(2)$ and $(4)$ of properties \ref{normprop}.
\end{itemize}
We get that, \large{$A = \left(\frac{\pi_2}{\pi_1}\right)_5^{a_2-a_1} = 1$}, then $a_2-a_1 = 0$ or $5$, which is impossible because $a_1, a_2 \in \{1,2,3,4\}$ and $a_1 \neq a_2$, hence \large{$\left(\frac{\pi_2}{\pi_1}\right)_5 = 1$}. Since $\pi_1$ and $\pi_2$ play symmetric roles we have also \large{$\left(\frac{\pi_1}{\pi_2} \right)_5 = 1$}. Thus we deduce that \large{$\left(\frac{\pi_2}{\pi_1} \right)_5 = \left(\frac{\pi_1}{\pi_2} \right)_5 = 1$}.
\end{itemize}
\end{proof}
\subsection{Case 1: $n = 5^{e_1}p^{e_2}\,\not\equiv\,\pm1,\pm7\,(\mathrm{mod}\,25)$ with $p\,\not\equiv\,-1\,(\mathrm{mod}\,25)$}\label{case1}
\large{Let $\Gamma = \mathbb{Q}(\sqrt[5]{5^{e_1}p^{e_2}})$ be a pure quintic field where $e_1, e_2 \in\{1,2,3,4\}$, $p$ a prime number such that $p\,\not\equiv\,-1\,(\mathrm{mod}\,25)$ and $k = \Gamma(\zeta_5)$ its normal closure. We have $p = \pi_1\pi_2$ in $k_0$. By proposition \ref{ramifGamm}, $p$ is totally ramified in $\Gamma$, therefore $\pi_1,\,\pi_2$ are ramified in $k$. According to [\cite{Mani}, Lemma 5.1] we have $\lambda = 1-\zeta_5$ is ramified in $k/k_0$ because $5^{e_1}p^{e_2}\,\not\equiv \pm1 \pm7\,(\mathrm{mod}\,\lambda^5)$ $(e\neq0)$.\\
If  we denoteby $\mathcal{P}_1, \mathcal{P}_2$ and $\mathcal{I}$ respectively, the prime ideals of $k$ above $\pi_1, \pi_2$ and $\lambda$ respectively, we get that $\mathcal{P}_i^5 = \pi_i\mathcal{O}_k$ $(i=1,2)$, $\mathcal{I}^5 = \lambda\mathcal{O}_k$, $\mathcal{P}_i^\sigma = \mathcal{P}_i$, $\mathcal{P}_1^\tau = \mathcal{P}_2$, $\mathcal{I}^\sigma = \mathcal{I}^\tau = \mathcal{I}$. 
 }.\\
According to Proposition \ref{prop55}, we have $C_{k,5}\,\cong\,C_{k,5}^+\times C_{k,5}^-$, such that $C_{k,5}^+$ and $C_{k,5}^-$ are cyclic subgoups of order $5$. Since $\mathcal{P}_i^\sigma = \mathcal{P}_i$ $(i = 1,2)$, $\mathcal{P}_1^\tau = \mathcal{P}_2$, $\mathcal{P}_2^\tau = \mathcal{P}_1$ and $\mathcal{I}^\sigma = \mathcal{I}^{\tau} = \mathcal{I}$, we get that $[\mathcal{P}_1]$,$[\mathcal{P}_2]$ and $[\mathcal{I}]$ are ambiguous classes. As $C_{k,5}^{(\sigma)}$ is elementary group of rank $1$, we can deduce that $C_{k,5}^{(\sigma)} = \langle[\mathcal{P}_1]\rangle$. Also as $[\mathcal{P}_1]^{\tau^2} = [\mathcal{P}_1]$, we have $C_{k,5}^+ = \langle[\mathcal{P}_1]\rangle$, therefore $C_{k,5}^+ = C_{k,5}^{(\sigma)} = \langle[\mathcal{P}_1]\rangle$ if and only if $\mathcal{P}_1$ is not principal.\\
we argue by reduction to absurd: Assume that $\mathcal{P}_1$ is principal, we have:

\begin{center}
$[\mathcal{P}_1] = 1\Longrightarrow\, \exists \beta \in \mathcal{O}_{k}\, |\,\mathcal{P}_1 = \beta\mathcal{O}_{k}$ \\
\hspace{2.5cm}$\Longrightarrow\, \mathcal{N}_{k/k_0}(\mathcal{P}_1) = \mathcal{N}_{k/k_0}(\beta\mathcal{O}_{k})$\\
\hspace{2cm}$\Longrightarrow\, \pi_1\mathcal{O}_{k_0} = \mathcal{N}_{k/k_0}(\beta)\mathcal{O}_{k_0}$\\
\hspace{2.5cm} $\Longrightarrow\, \exists \epsilon \in E_{k_0}\, |\,\pi_1 = \epsilon\mathcal{N}_{k/k_0}(\beta)$ 
\end{center}

According to [\cite{Mani}, theorem 5.15] $E_{k_0}\subset \mathcal{N}_{k/k_0}(k^{*})$ because  $q^{*} = 2$ 
\begin{center}
\hspace{3cm}$\Longrightarrow\, \exists \alpha \in k-\{0\}\, |\,\pi_1 = \mathcal{N}_{k/k_0}(\alpha)$ 
\end{center}
that is to say $\pi_1$ is norm in $k = k_0(\sqrt[5]{5^e\pi_1\pi_2})$, where $\pi_1,\pi_2$ are primes above $p$ in $k_0$. Hence we have:
\begin{center}
\large{$\left(\frac{\pi_1,5^e\pi_1\pi_2}{\mathcal{P}}\right)_5 = 1$}
\end{center}
for all ideals $\mathcal{P}$ of $k_0$.\\
In particular, we calculate this symbol for $\mathcal{P} = \pi_1\mathcal{O}_{k_0}$ or $\mathcal{P} = \pi_2\mathcal{O}_{k_0}$. On the one hand,
\begin{center}
$A$  = \large{$\left(\frac{\pi_1,5^e\pi_1\pi_2}{\pi_1\mathcal{O}_{k_0}}\right)_5\,= \left(\frac{\pi_1,5^e}{\pi_1}\right)_5\,. \left(\frac{\pi_1,\pi_1}{\pi_1}\right)_5. \left(\frac{\pi_1,\pi_2}{\pi_1}\right)_5$}
\end{center}
On the other hand, we have:
\begin{itemize}
\item[-] \large{$\left(\frac{\pi_1,\pi_1}{\pi_1}\right)_5 = 1$}, because $\pi_1$ is norm in $k_0(\sqrt[5]{\pi_1})/k_0$.

\item[-] \large{$\left(\frac{\pi_1,\pi_2}{\pi_1}\right)_5 = \left(\frac{\pi_2}{\pi_1}\right)_5^{-1} = 1$}, by $(4)$ of properties \ref{normprop}, and $(4)$ of \ref{propprof}.

\item[-] \large{$\left(\frac{\pi_1,5^e}{\pi_1}\right)_5 = \left(\frac{5}{\pi_1}\right)_5^{-e}$}, by $(4)$ of properties \ref{normprop}.
\end{itemize}
We get that, \large{$A = \left(\frac{5}{\pi_1}\right)_5^{-e}=\,1$}. Since $\pi_1$ and $\pi_2$ play symmetric roles, then $B$ = \large{$\left(\frac{\pi_1,5^e\pi_1\pi_2}{\pi_2\mathcal{O}_{k_0}}\right)_5 = \left(\frac{5}{\pi_2}\right)_5^{-e}$}, 
and since \large{$\left(\frac{\pi_1,5^e\pi_1\pi_2}{\mathcal{P}}\right)_5 = 1$}, for all prime ideals of $k_0$, then $A = B = 1$, namly 
\begin{center}
\large{$\left(\frac{5}{\pi_1}\right)_5^{-e} = \left(\frac{5}{\pi_2}\right)_5^{-e} = 1$}. 
\end{center}
In fact that $5$ is not a quintic residue modulo $p$, implies that
\begin{center}
\large{$\left(\frac{5}{\pi_1\pi_2}\right)_5 = \left(\frac{5}{\pi_1}\right)_5\left(\frac{5}{\pi_2}\right)_5\,\neq\,1$}. 
\end{center}
then
\begin{center}
\large{$\left(\frac{5}{\pi_1}\right)_5\,\neq\,1\,\mathrm{or}\,\left(\frac{5}{\pi_2}\right)_5\,\neq\,1$}. 
\end{center}
Since $5$ does not divide $e$, then
\begin{center}
\large{$\left(\frac{5}{\pi_1}\right)_5^e\,\neq\,1\,\mathrm{or}\,\left(\frac{5}{\pi_2}\right)_5^e\,\neq\,1$}. 
\end{center}
Which is a contradiction. Consequently, the ideal $\mathcal{P}_1$ is not principal.\\
The second step in the proof of the case 1, is to find a non-ambiguous class, which generates the group $C_{k,5}^-$. By class field theory, the classes arise from unramified primes of $k_0$, these are not ambigous. Let $l$ be prime integer such that $l\,\neq\,p$, then $l$ is unramified in $\Gamma$. Let $\pi'$ a prime of $k_0$ above $l$, then $\pi'$ is unramified in $k$. Let $\mathcal{L}$ a prime ideal of $k$ above $\pi'$, then $C_{k,5}^- = \langle[\mathcal{L}]^{1-\tau^2}\rangle$ if and only if $\mathcal{L}$ is not principal.\\
we argue by reduction to absurd: Assume that $\mathcal{L}$ is principal, we have:

\begin{center}
$[\mathcal{L}] = 1\Longrightarrow\, \exists \beta \in \mathcal{O}_{k}\, |\,\mathcal{L} = \beta\mathcal{O}_{k}$ \\
\hspace{2.5cm}$\Longrightarrow\, \mathcal{N}_{k/k_0}(\mathcal{L}) = \mathcal{N}_{k/k_0}(\beta\mathcal{O}_{k})$\\
\hspace{2cm}$\Longrightarrow\, \pi'\mathcal{O}_{k_0} = \mathcal{N}_{k/k_0}(\beta)\mathcal{O}_{k_0}$\\
\hspace{2.5cm} $\Longrightarrow\, \exists \epsilon \in E_{k_0}\, |\,\pi' = \epsilon\mathcal{N}_{k/k_0}(\beta)$ 
\end{center}

By [\cite{Mani}, theorem 5.15] we have $E_{k_0}\subset \mathcal{N}_{k/k_0}(k^{*})$, then
\begin{center}
\hspace{3cm}$\Longrightarrow\, \exists \alpha \in k-\{0\}\, |\,\pi' = \mathcal{N}_{k/k_0}(\alpha)$ 
\end{center}
that is to say $\pi'$ is norm in $k = k_0(\sqrt[5]{5^e\pi_1\pi_2})$, where $\pi_1,\pi_2$ are two primes of $k_0$ such that $p = \pi_1\pi_2$. Hence we have:
\begin{center}
\large{$\left(\frac{\pi',5^e\pi_1\pi_2}{\mathcal{P}}\right)_5 = 1$}
\end{center}
for all ideals $\mathcal{P}$ of $k_0$.\\
In particular, we calculate this symbol for $\mathcal{P} = \pi_1\mathcal{O}_{k_0}$ or $\mathcal{P} = \pi_2\mathcal{O}_{k_0}$. On one hand,
\begin{center}
$A$  = \large{$\left(\frac{\pi',5^e\pi_1\pi_2}{\pi_1\mathcal{O}_{k_0}}\right)_5\,= \left(\frac{\pi',5^e}{\pi_1}\right)_5\,. \left(\frac{\pi',\pi_1}{\pi_1}\right)_5. \left(\frac{\pi',\pi_2}{\pi_1}\right)_5$}
\end{center}
On the other hand, we have:
\begin{itemize}
\item[-] \large{$\left(\frac{\pi',5^e}{\pi_1}\right)_5 = \left(\frac{\pi',5}{\pi_1}\right)_5^e = \left(\frac{\pi'}{\pi_1}\right)_5^{0\times e} = 1$}, by $(1)$, $(4)$ of properties \ref{normprop}.

\item[-] \large{$\left(\frac{\pi',\pi_2}{\pi_1}\right)_5 = \left(\frac{\pi'}{\pi_1}\right)_5^{0} = 1$}, by $(4)$ of properties \ref{normprop}.

\item[-] \large{$\left(\frac{\pi',\pi_1}{\pi_1}\right)_5 = \left(\frac{\pi'}{\pi_1}\right)_5$}, by $(4)$ of properties \ref{normprop}.
\end{itemize}
We get that, \large{$A = \left(\frac{\pi'}{\pi_1}\right)_5=\,1$}. Since $\pi_1$ and $\pi_2$ play symmetric roles, then $B$ = \large{$\left(\frac{\pi',5^e\pi_1\pi_2}{\pi_2\mathcal{O}_{k_0}}\right)_5 = \left(\frac{\pi'}{\pi_2}\right)_5$}, 
and since \large{$\left(\frac{\pi',5^e\pi_1\pi_2}{\mathcal{P}}\right)_5 = 1$}, for all prime ideals of $k_0$, then $A = B = 1$, namly 
\begin{center}
\large{$\left(\frac{\pi'}{\pi_1}\right)_5 = \left(\frac{\pi'}{\pi_2}\right)_5 = 1$}. 
\end{center}
In fact that $l$ is not a quintic residue modulo $p$, implies that
\begin{center}
\large{$\left(\frac{\pi'}{\pi_1\pi_2}\right)_5 = \left(\frac{\pi'}{\pi_1}\right)_5\left(\frac{\pi'}{\pi_2}\right)_5\,\neq\,1$}. 
\end{center}
then
\begin{center}
\large{$\left(\frac{ \pi'}{\pi_1}\right)_5\,\neq\,1\,\mathrm{or}\,\left(\frac{\pi'}{\pi_2}\right)_5\,\neq\,1$}. 
\end{center}
Which is a contradiction. Consequently, the ideal $\mathcal{L}$ is not principal.\\
It is easy to see that $[\mathcal{L}]^{1-\tau^2}\,\in\,C_{k,5}^-$, and $[\mathcal{L}]^{1-\tau^2}\,\neq\,1$, otherwise $[\mathcal{L}]^{1-\tau^2} = 1$ then $[\mathcal{L}] = [\mathcal{L}]^{\tau^2}$, therefore $[\mathcal{L}]\,\in\,C_{k,5}^+ = C_{k,5}^{(\sigma)}$, which contradict the fact that the class $[\mathcal{L}]$ is not ambigous.
Finally we deduce that
\begin{center}
$C_{k,5}\,\cong\,\langle[\mathcal{P}_1]\rangle\,\times \langle[\mathcal{L}]^{1-\tau^2}\rangle\,\cong\,\langle[\mathcal{P}_1],[\mathcal{L}]^{1-\tau^2}\rangle$
\end{center}
\begin{remark}
Since $[\mathcal{P}_2]$ and $[\mathcal{I}]$ are also ambiguous classes, we can prove by the same reasoning that:
\begin{center}
$C_{k,5}\,\cong\,\langle[\mathcal{P}_2],[\mathcal{L}]^{1-\tau^2}\rangle\,\cong\,\langle[\mathcal{I}],[\mathcal{L}]^{1-\tau^2}\rangle$
\end{center}
\end{remark}
\subsection{Case 2: $n = p^{e_1}q^{e_2}\equiv\pm1,\pm7\,(\mathrm{mod}\,25)$ with $p\,\not\equiv\,-1\,(\mathrm{mod}\,25)$ and $q\,\not\equiv\,\pm7\,(\mathrm{mod}\,25)$}\label{case2}
\large{Let $\Gamma = \mathbb{Q}(\sqrt[5]{p^{e_1}q^{e_2}})$ be a pure quintic field where $e\in\{1,2,3,4\}$, $p$, $q$  primes such that $p\,\not\equiv\,-1\,(\mathrm{mod}\,25)$ and $q\,\not\equiv\,\pm7\,(\mathrm{mod}\,25)$ and $k = \Gamma(\zeta_5)$ the normal closure. We have $p = \pi_1\pi_2$ and $q$ is inert in $k_0$ from Proposition \ref{corocyc}. By Proposition \ref{ramifGamm}, $p$ and $q$ are totally ramified in $\Gamma$, therefore $\pi_1,\,\pi_2$ and $q$ are ramified in $k$. According to [\cite{Mani}, Lemma 5.1] we have $\lambda = 1-\zeta_5$ is not ramified in $k/k_0$ because $n\,\equiv \pm1, \pm7\,(\mathrm{mod}\,\lambda^5)$.\\
If  we denote by $\mathcal{P}_1, \mathcal{P}_2, \mathcal{Q}$ respectively, the prime ideals of $k$ above $\pi_1, \pi_2, q$ respectively, we get that $\mathcal{P}_i^5 = \pi_i\mathcal{O}_k$ $(i=1,2)$, $\mathcal{Q}^5 = q\mathcal{O}_k$. Let $l$ be a prime integer such that $l\,\neq\,p$ and 
$l\,\neq\,q$, then $l$ is unramified in $\Gamma$. Let $\mathcal{L}$ be a prime ideal of $k$ above $l$.\\
We prove the result of the second point of the Main Theorem \ref{thp}, by the same reasoning as for case $1$. It is sufficient to replace $5$ by $q$.
Then we have,
\begin{center}
$C_{k,5}\,\cong\,\langle[\mathcal{P}_1]\rangle\,\times \langle[\mathcal{L}]^{1-\tau^2}\rangle\,\cong\,\langle[\mathcal{P}_1],[\mathcal{L}]^{1-\tau^2}\rangle$
\end{center}
 \begin{remark}
Since $[\mathcal{P}_2]$ and $[\mathcal{Q}]$ are also ambiguous classes, we can prove by the same reasoning that:
\begin{center}
$C_{k,5}\,\cong\,\langle[\mathcal{P}_2],[\mathcal{L}]^{1-\tau^2}\rangle\,\cong\,\langle[\mathcal{Q}],[\mathcal{L}]^{1-\tau^2}\rangle$
\end{center}
\end{remark}
\subsection{Case 3: $n = p^e\equiv\pm1,\pm7\,(\mathrm{mod}\,25)$ with $p\equiv-1\,(\mathrm{mod}\,25)$}
Let $\Gamma = \mathbb{Q}(\sqrt[5]{p^e})$ be a pure quintic field, where $e\in\{1,2,3,4\}$, and $p$ a prime such that, $p\equiv-1\,(\mathrm{mod}\,25)$, and $k = \Gamma(\zeta_5)$ its normal closure. Without loosing generality, we can choose $e = 1$. Since the field $\Gamma$ is of second kind, then by Proposition \ref{dec5k}, $5$ decomposes in $k$ as $5\mathcal{O}_k = \mathcal{B}^4_1\mathcal{B}^4_2\mathcal{B}^4_3\mathcal{B}^4_4\mathcal{B}^4_5$, where $\mathcal{B}_i$ are prime ideals of $k$. Since $[\mathcal{B}_i]$ and $[\mathcal{B}_j]$ with $i\,\neq\,j\in\{1,2,3,4\}$ are in $C_{k,5}$ we have $\langle[\mathcal{B}_i],[\mathcal{B}_j]\rangle \, \subset\, C_{k,5}$. To prove that $\langle[\mathcal{B}_i],[\mathcal{B}_j]\rangle  =  C_{k,5}$, we should prove that $\langle[\mathcal{B}_i],[\mathcal{B}_j]\rangle$ has order $25$. It is sufficient to prove that $[\mathcal{B}_i^{a_1}\mathcal{B}_j^{a_2}] = 1$ for $a_1,a_2\in\{0,1,2,3,4,\}$ if and only if $a_1 = a_2 = 0$.
\begin{center}
$[\mathcal{B}_i^{a_1}\mathcal{B}_j^{a_2}] = 1\Longrightarrow\, \exists\, \beta \in \mathcal{O}_{k}\, |\,\mathcal{B}_i^{a_1}\mathcal{B}_j^{a_2} = \beta\mathcal{O}_{k}$\\
\hspace{3.4cm}$\Longrightarrow\, \mathcal{N}_{k/k_0}(\mathcal{B}_i^{a_1}\mathcal{B}_j^{a_2}) = \mathcal{N}_{k/k_0}(\beta\mathcal{O}_{k})$\\
\hspace{5.7cm}$\Longrightarrow\, \lambda^{a_1+a_2}\mathcal{O}_{k_0} = \mathcal{N}_{k/k_0}(\beta)\mathcal{O}_{k_0}$ with $\lambda = 1-\zeta_5$\\
\hspace{3.3cm} $\Longrightarrow\, \exists\, \epsilon \in E_{k_0}\, |\,\lambda^{a_1+a_2} = \epsilon\mathcal{N}_{k/k_0}(\beta)$ 
\end{center}

According to [\cite{Mani}, theorem 5.15] $E_{k_0}\subset \mathcal{N}_{k/k_0}(k^{*})$ because  $q^{*} = 2$ 
\begin{center}
\hspace{3.3cm}$\Longrightarrow\, \exists \alpha \in k-\{0\}\, |\,\lambda^{a_1+a_2} = \mathcal{N}_{k/k_0}(\alpha)$ 
\end{center}
that is to say $\lambda^{a_1+a_2}$ is norm in $k = k_0(\sqrt[5]{p})$, where $p = \pi_1\pi_2$ in $k_0$. Hence we have:
\begin{center}
\large{$\left(\frac{\lambda^{a_1+a_2},\pi_1\pi_2}{\mathcal{P}}\right)_5 = 1$}
\end{center}
for all ideals $\mathcal{P}$ of $k_0$.\\
In particular, we calculate this symbol for $\mathcal{P} = \pi_1\mathcal{O}_{k_0}$ or $\mathcal{P} = \pi_2\mathcal{O}_{k_0}$. Using $(1)$ of properties \ref{normprop} xe have,
\begin{center}
$A = $\large{$\left(\frac{\lambda^{a_1+a_2},\pi_1\pi_2}{\pi_1\mathcal{O}_{k_0}}\right)_5 = \left(\frac{\lambda^{a_1},\pi_1}{\pi_1}\right)_5.\left(\frac{\lambda^{a_1},\pi_2}{\pi_1}\right)_5.\left(\frac{\lambda^{a_2},\pi_1}{\pi_1}\right)_5.\left(\frac{\lambda^{a_2},\pi_2}{\pi_1}\right)_5$}
\end{center}
then, we get:
\begin{itemize}
\item[-] \large{$\left(\frac{\lambda^{a_1},\pi_1}{\pi_1}\right)_5 = \left(\frac{\lambda,\pi_1}{\pi_1}\right)_5^{a_1} = \left(\frac{\lambda}{\pi_1}\right)_5^{a_1}$}, by $(1)$ and $(9)$ of properties \ref{normprop}.

\item[-] \large{$\left(\frac{\lambda^{a_1},\pi_2}{\pi_1}\right)_5 = \left(\frac{\lambda,\pi_2}{\pi_1}\right)_5^{a_1} = \left(\frac{\lambda}{\pi_1}\right)_5^{0\times a_1} = 1$}, by $(3)$ and $(4)$ of properties \ref{normprop}.

\item[-] \large{$\left(\frac{\lambda^{a_2},\pi_1}{\pi_1}\right)_5 = \left(\frac{\lambda,\pi_1}{\pi_1}\right)_5^{a_2} = \left(\frac{\lambda}{\pi_1}\right)_5^{a_2}$}, by $(1)$ and $(9)$ of properties \ref{normprop}.

\item[-] \large{$\left(\frac{\lambda^{a_2},\pi_2}{\pi_1}\right)_5 = \left(\frac{\lambda,\pi_2}{\pi_1}\right)_5^{a_2} = \left(\frac{\lambda}{\pi_1}\right)_5^{0\times a_2} = 1$}, by $(3)$ and $(4)$ of properties \ref{normprop}.

\end{itemize}
we get that, \large{$A = \left(\frac{\lambda}{\pi_1}\right)_5^{a_1+a_2}=\,1$}. Since $\pi_1$ and $\pi_2$ play symmteric roles, then $B$ = \large{$\left(\frac{\lambda^{a_1+a_2},\pi_1\pi_2}{\pi_2\mathcal{O}_{k_0}}\right)_5 = \left(\frac{\lambda}{\pi_2}\right)_5^{a_1+a_2}$}, 
and since \large{$\left(\frac{\lambda^{a_1+a_2},\pi_1\pi_2}{\mathcal{P}}\right)_5 = 1$}, for all prime ideals of $k_0$, then $A = B = 1$, namely \large{$\left(\frac{\lambda}{\pi_1}\right)_5^{a_1+a_2} = \left(\frac{\lambda}{\pi_2}\right)_5^{a_1+a_2} = 1$}. Since $5$ is not a quintic residue modulo $p$, we have $a_1+a_2 = 0$ or $5$, then $a_1 = a_2 = 0$, because otherwise $a_1\,+\,a_2 = 5$, so we need to distinguish two cases:\\

- If $a_1 = 1$ and $a_2 = 4$, then $[\mathcal{B}_i\mathcal{B}_j^4] = 1$ means that $[\mathcal{B}_i] = [\mathcal{B}_j]^{-4}$ and since $[\mathcal{B}_j]^5 = 1$ because $[\mathcal{B}_i]$ is $5$-class, we get $[\mathcal{B}_i] = [\mathcal{B}_j]$ which is impossible.\\

- If $a_1 = 2$ and $a_2 = 3$, then $[\mathcal{B}_i^2\mathcal{B}_j^3] = 1$ means that $[\mathcal{B}_i]^2 = [\mathcal{B}_j]^{-3}$, so $[\mathcal{B}_i] = [\mathcal{B}_j]$ which is impossible.\\
Thus the case $a_1\,+\,a_2 = 5$ cannot occur. Hence $\langle[\mathcal{B}_i],[\mathcal{B}_j]\rangle$ has order $25$, and since $\langle[\mathcal{B}_i],[\mathcal{B}_j]\rangle\,\subset C_{k,5}$ we deduce that $C_{k,5} = \langle[\mathcal{B}_i],[\mathcal{B}_j]\rangle$. 
\begin{corollary}
Using the same notation as above, we have:
\item[$$(1)$$] $C_{k,5}^{(\sigma)} = C_{k,5}^+ = \langle[\mathcal{B}_1\mathcal{B}_2\mathcal{B}_3\mathcal{B}_4\mathcal{B}_5]\rangle$.
\item[$$(2)$$] $C_{k,5}^- = \langle[\mathcal{B}_i]^{1-\tau^2}\rangle$ for $(i=1,2,3,4,5)$.
\item[$(3)$] The $5$-class group can be generated also by: 
\begin{center}
$C_{k,5} = \langle[\mathcal{B}_1\mathcal{B}_2\mathcal{B}_3\mathcal{B}_4\mathcal{B}_5], [\mathcal{B}_i]^{1-\tau^2}\rangle$ for $(i=1,2,3,4,5)$.
\end{center}
\end{corollary}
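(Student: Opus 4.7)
The corollary refines Case 3 of Theorem \ref{thp} by repackaging the two-generator description of $C_{k,5}$ in terms of the direct sum decomposition $C_{k,5}\cong C_{k,5}^+\times C_{k,5}^-$ from the opening lemma of Section 5. The plan is to prove the three assertions in order: first identify $C_{k,5}^{(\sigma)}$ with $C_{k,5}^+$, then exhibit the claimed $\sigma$-invariant generator, then produce the $(1-\tau^2)$-generator of $C_{k,5}^-$, and finally combine them for (3).

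For (1), I would observe that the natural inclusion $C_{\Gamma,5}\hookrightarrow C_{k,5}$ lands in $C_{k,5}^{(\sigma)}$ since $\sigma$ acts trivially on $\Gamma$, and simultaneously in $C_{k,5}^+$ since $\tau^2$ fixes $\Gamma$. Combined with Lemma \ref{Gamma C+} giving $C_{\Gamma,5}\cong C_{k,5}^+$, the hypothesis $\mathrm{rank}\,C_{k,5}^{(\sigma)}=1$, and $|C_{k,5}^+|=5$ from Proposition \ref{prop55}, order considerations then force $C_{k,5}^{(\sigma)}=C_{k,5}^+$. To identify the generator, Proposition \ref{dec5k} (second-kind case) implies $\lambda\mathcal{O}_k=\mathcal{B}_1\mathcal{B}_2\mathcal{B}_3\mathcal{B}_4\mathcal{B}_5$ because $\lambda$ splits completely in $k/k_0$; the product is visibly $\sigma$-invariant, so its class lies in $C_{k,5}^{(\sigma)}$. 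Establishing that this class is nontrivial, hence a generator, is the main obstacle; I would attack it by a norm residue argument in the style of Cases 1 and 2 of Theorem \ref{thp}, assuming principality, reducing to the vanishing of a product of quintic residue symbols at $\pi_1$ and $\pi_2$, and deriving a contradiction from the hypothesis that $5$ is not a quintic residue modulo $p$.

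For (2), once (1) is in hand, $C_{k,5}^-$ is cyclic of order $5$. To verify that $[\mathcal{B}_i]^{1-\tau^2}$ generates it, I would check nontriviality: if $[\mathcal{B}_i]^{1-\tau^2}=1$ then $[\mathcal{B}_i]$ would be $\tau^2$-fixed, hence lie in $C_{k,5}^+=C_{k,5}^{(\sigma)}$, forcing $[\mathcal{B}_i]^\sigma=[\mathcal{B}_i]$. Since $\sigma$ permutes $\{\mathcal{B}_1,\dots,\mathcal{B}_5\}$ in a single orbit (the unique orbit above $\lambda$), this would make all $[\mathcal{B}_j]$ equal, and combined with $C_{k,5}=\langle[\mathcal{B}_i],[\mathcal{B}_j]\rangle$ from Theorem \ref{thp} would collapse $C_{k,5}$ to a cyclic group of order $5$, contradicting $C_{k,5}\cong(\mathbb{Z}/5\mathbb{Z})\times(\mathbb{Z}/5\mathbb{Z})$.

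Finally (3) is immediate from (1) and (2) and the direct product $C_{k,5}=C_{k,5}^+\times C_{k,5}^-$: we simply paste together the cyclic generators found in the two steps. The expected hard step is the nontriviality assertion in (1); the argument should parallel the proof of Theorem \ref{thp} for Case 3, but with the product of all five primes above $\lambda$ in place of a single $\mathcal{B}_i$, so that the norm residue computation at $\pi_1$ and $\pi_2$ reduces to quintic residue symbols of $\lambda$ (equivalently $5$) rather than of a single $\lambda^{a_1+a_2}$.
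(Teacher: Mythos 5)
Your plan for parts (2) and (3) is sound and in fact more complete than the paper's own justification: the paper merely remarks that $[\mathcal{B}_i]^{1-\tau^2}\in C_{k,5}^-$ because $\mathcal{B}_i$ is not principal, whereas you actually prove nontriviality (if $[\mathcal{B}_i]^{1-\tau^2}=1$ then $[\mathcal{B}_i]\in C_{k,5}^+=C_{k,5}^{(\sigma)}$, and since $\sigma$ permutes the five primes above $\lambda$ transitively, all the $[\mathcal{B}_j]$ would coincide, contradicting $\langle[\mathcal{B}_i],[\mathcal{B}_j]\rangle=C_{k,5}\cong(\mathbb{Z}/5\mathbb{Z})^2$). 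Part (3) is then immediate from the direct product decomposition, exactly as in the paper.

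The gap is in part (1), and it is fatal to the strategy you propose. You correctly record that $\lambda\mathcal{O}_k=\mathcal{B}_1\mathcal{B}_2\mathcal{B}_3\mathcal{B}_4\mathcal{B}_5$ (second-kind case: $\lambda$ splits completely in $k/k_0$), but $\lambda=1-\zeta_5$ is an element of $\mathcal{O}_k$, so $\lambda\mathcal{O}_k$ is a principal ideal of $k$ and therefore $[\mathcal{B}_1\mathcal{B}_2\mathcal{B}_3\mathcal{B}_4\mathcal{B}_5]=1$. No norm-residue computation can establish the nontriviality of a class that is trivial; indeed the computation of Case 3 of the Main Theorem applied to $\mathcal{B}_i^{a_1}\mathcal{B}_j^{a_2}$ only yields $\left(\frac{\lambda}{\pi_1}\right)_5^{a_1+a_2}=1$, i.e. $a_1+a_2\equiv 0\pmod 5$, and the full product corresponds to exponent $5\equiv 0$, where the symbol is automatically $1$. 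So the ``main obstacle'' you identify cannot be overcome, and the generator named in item (1) is the identity class. (The paper's own one-line proof does not confront this either: it invokes the non-principality of the individual $\mathcal{B}_i$ together with the $\sigma$- and $\tau^2$-invariance of the product, neither of which gives non-principality of the product.) A correct $\sigma$-invariant nontrivial class must be sought elsewhere, for instance among $[\mathcal{B}_i]^{\sigma-1}=[\mathcal{B}_i^{-1}\mathcal{B}_i^{\sigma}]$, which lies in $C_{k,5}^{(\sigma)}$ when that group has rank $1$. A second, smaller slip: you justify $C_{\Gamma,5}\hookrightarrow C_{k,5}^{(\sigma)}$ by saying that $\sigma$ acts trivially on $\Gamma$; it does not ($\sigma$ fixes $k_0$ pointwise, while it is $\tau$ that fixes $\Gamma$), so the inclusion you get for free is into $C_{k,5}^+$, and the equality $C_{k,5}^{(\sigma)}=C_{k,5}^+$ requires a separate argument (in Cases 1 and 2 the paper obtains it by exhibiting a common nontrivial generator of both groups).
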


\begin{proof}
\item[$(1)$] The fact that the ideals $\mathcal{B}_i$ are not principal by the same reasoning as above, we prove that $[\mathcal{B}_1\mathcal{B}_2\mathcal{B}_3\mathcal{B}_4\mathcal{B}_5]^\sigma = [\mathcal{B}_1\mathcal{B}_2\mathcal{B}_3\mathcal{B}_4\mathcal{B}_5]$ and  $[\mathcal{B}_1\mathcal{B}_2\mathcal{B}_3\mathcal{B}_4\mathcal{B}_5]^{\tau^2} = [\mathcal{B}_1\mathcal{B}_2\mathcal{B}_3\mathcal{B}_4\mathcal{B}_5]$ by applying the decomposition of $5$ in the normal closure $k$.
\item[$(2)$] Since $\mathcal{B}_i$ are not principal then $[\mathcal{B}_i]^{1-\tau^2} \in C_{k,5}^-$
\item[$(3)$] That comes from $C_{k,5} \cong C_{k,5}^+ \times C_{k,5}^-$
\end{proof}
\section{Numerical examples}
Using the system PARI/GP [\cite{PRI}], we illustrate our main result Theorem \ref{thp} by numerical examples.
\subsection{Case 1: $n = 5^{e_1}p^{e_2}\,\not\equiv\,\pm1,\pm7\,(\mathrm{mod}\,25)$ with $p\,\not\equiv\,-1\,(\mathrm{mod}\,25)$}
In this case we have: $C_{k,5} = \langle[\mathcal{P}],[\mathcal{L}]^{1-\tau^2}\rangle$, where $\mathcal{P}$ is a prime ideal of $k$ above $p$, and $\mathcal{L}$ is prime ideals of $k$ above $l$ such that $l\,\neq\,p$. The following table verifies, for some primes $p\,\not\equiv\,-1\,(\mathrm{mod}\,25)$ and $l\,\neq\,p$, such that $5$ and $l$ are not quintic residues modulo $p$ and $C_{k,5}$ is of type $(5,5)$, that the ideals $\mathcal{P}$ and $\mathcal{L}$ are not principal, and are of order $5$
\begin{center}
 Table 1: 
\end{center}

\begin{tabular}{|c|c|c|c|c|c|c|}
\hline 
$p$ & $l$ & Type of $C_{k,5}$ & Is principal $\mathcal{P}$ & Is principal $\mathcal{L}$ & Is principal $\mathcal{P}^5$ & Is principal $\mathcal{L}^5$  \\ 
\hline 
19 & 2 & (5,5) & [4,0] & [1,0] & [0,0] & [0,0] \\ 
19 & 113 & (5,5) & [4,0] & [4,0] & [0,0] & [0,0] \\ 
29 & 43 & (5,5) & [6,0,0,0] & [6,0,1,0] & [0,0,0,0] & [0,0,0,0] \\ 
29 & 149 & (5,5) & [6,0,0,0] & [2,0,0,0] & [0,0,0,0] & [0,0,0,0] \\ 
59 & 67 & (5,5) & [4,0,0,0] & [6,0,0,0] & [0,0,0,0] & [0,0,0,0] \\ 
79 & 97 & (5,5) & [21,21,0,0] & [28,28,0,0] & [0,0,0,0] & [0,0,0,0] \\ 
79 & 307 & (5,5) & [21,21,0,0] & [7,7,0,0] & [0,0,0,0] & [0,0,0,0] \\ 
89 & 19 & (5,5) & [1,0] & [4,0] & [0,0] & [0,0] \\ 
89 & 101 & (5,5) & [1,0] & [4,4] & [0,0] & [0,0] \\ 
109 & 13 & (5,5) & [2,0] & [1,0] & [0,0] & [0,0] \\ 
109 & 103 & (5,5) & [2,0] & [1,0] & [0,0] & [0,0] \\ 
179 & 157 & (5,5) & [3,0] & [2,0] & [0,0] & [0,0] \\ 
299 & 83 & (5,5) & [8,12,0,0] & [12,8,0,0] & [0,0] & [0,0] \\ 
239 & 3 & (5,5) & [3,0] & [2,0] & [0,0] & [0,0] \\ 
269 & 157 & (5,5) & [1,0] & [3,0] & [0,0] & [0,0] \\

\hline 
\end{tabular} 

\newpage
\subsection{Case 2: $n = p^{e_1}q^{e_2}\equiv\pm1,\pm7\,(\mathrm{mod}\,25)$ with $p\,\not\equiv\,-1\,(\mathrm{mod}\,25)$ and $q\,\not\equiv\,\pm7\,(\mathrm{mod}\,25)$}
In this case we have: $C_{k,5} = \langle[\mathcal{P}],[\mathcal{L}]^{1-\tau^2}\rangle$, where $\mathcal{P}$, $\mathcal{L}$ as above.
\begin{center}
Table 2 
\end{center}
\begin{tabular}{|c|c|c|c|c|c|c|c|}
\hline 
$p$ & $q$&$l$&Type of $C_{k,5}$ & Is principal $\mathcal{P}_1$ & Is principal $\mathcal{P}_2$ & Is principal $\mathcal{P}_1^5$ & Is principal $\mathcal{P}_2^5$  \\ 
\hline 
19 & 3& 53&(5,5) & [1,0] & [3,0] & [0,0] & [0,0] \\ 
19 & 3& 67&(5,5) & [1,0] & [4,0] & [0,0] & [0,0] \\ 
19 & 53& 7&(5,5) & [3,0] & [1,0] & [0,0] & [0,0] \\ 
19 & 53& 23&(5,5) & [3,0] & [1,0] & [0,0] & [0,0] \\ 
29 & 17& 157&(5,5) & [28,14,0,0] & [14,7,0,0] & [0,0,0,0] & [0,0,0,0] \\ 
59 & 2& 13&(5,5) & [0,2] & [0,2] & [0,0] & [0,0] \\ 
59 & 2& 47&(5,5) & [0,2] & [0,3] & [0,0] & [0,0] \\ 
59 & 23& 127&(5,5) & [2,0,0,0] & [6,0,0,0] & [0,0,0,0] & [0,0,0,0] \\ 
89 & 37& 61&(5,5) & [3,0] & [3,0] & [0,0] & [0,0] \\ 
89 & 37& 73&(5,5) & [3,0] & [2,0] & [0,0] & [0,0] \\ 
109 & 23& 47&(5,5) & [28,0,0,0] & [14,0,0,0] & [0,0,0,0] & [0,0,0,0] \\ 
359 & 2& 61&(5,5) & [2,1] & [3,1] & [0,0] & [0,0] \\ 
359 & 2& 97&(5,5) & [2,1] & [4,2] & [0,0] & [0,0] \\ 
409 & 2& 157&(5,5) & [2,0] & [1,0] & [0,0] & [0,0] \\ 
409 & 2& 163&(5,5) & [2,0] & [3,0] & [0,0] & [0,0] \\ 
509 & 2& 53&(5,5) & [2,0] & [3,0] & [0,0] & [0,0] \\ 
509 & 2& 79&(5,5) & [2,0] & [4,0] & [0,0] & [0,0] \\ 

\hline 
\end{tabular} 
\vspace{1cm}
\subsection{Case 3: $n = p^e\equiv\pm1,\pm7\,(\mathrm{mod}\,25)$ with $p\equiv-1\,(\mathrm{mod}\,25)$}
In this case we have: $C_{k,5} = \langle[\mathcal{B}_1],[\mathcal{B}_2]\rangle$, where $\mathcal{B}_i$ are prime ideals of $k$ above $5$. The following table verifies that the ideals $\mathcal{B}_1$ and $\mathcal{B}_2$ are not principal and are of order $5$ such that \large{$\left(\frac{5}{p}\right)_5\,\neq\,1$}
\begin{center}
 Table 3: 
\end{center}

\begin{tabular}{|c|c|c|c|c|c|}
\hline 
$p$ & Type of $C_{k,5}$ & Is principal $\mathcal{B}_1$ & Is principal $\mathcal{B}_2$ & Is principal $\mathcal{B}_1^5$ & Is principal $\mathcal{B}_2^5$  \\ 
\hline 
149 & (5,5) & [1,0] & [1,0] & [0,0] & [0,0] \\ 
199 & (5,5) & [6,0,0,0] & [6,0,0,0] & [0,0,0,0] & [0,0,0,0] \\ 
499 & (5,5) & [1,4] & [1,4] & [0,0] & [0,0] \\ 
599 & (5,5) & [3,0] & [3,0] & [0,0] & [0,0] \\ 
2099 & (5,5) & [1,0] & [1,0] & [0,0] & [0,0] \\ 
2549 & (5,5) & [1,0] & [1,0] & [0,0] & [0,0] \\ 
2699 & (5,5) & [1,0] & [1,0] & [0,0] & [0,0] \\ 
\end{tabular}
\newpage
\begin{tabular}{|c|c|c|c|c|c|}
\textcolor{white}{$p$} & \textcolor{white}{Type of $C_{k,5}$} & \textcolor{white}{Is principal $\mathcal{B}_1$} & \textcolor{white}{Is principal $\mathcal{B}_2$} & \textcolor{white}{Is principal $\mathcal{B}_1^5$} & \textcolor{white}{Is principal $\mathcal{B}_2^5$}  \\
2749 & (5,5) & [1,0] & [1,0] & [0,0] & [0,0] \\ 
3299 & (5,5) & [1,0] & [1,0] & [0,0] & [0,0] \\ 
4049 & (5,5) & [1,0] & [1,0] & [0,0] & [0,0] \\ 
4099 & (5,5) & [8,0,0,0] & [8,0,0,0] & [0,0,0,0] & [0,0,0,0] \\ 
4349 & (5,5) & [1,0] & [1,0] & [0,0] & [0,0] \\ 
4549 & (5,5) & [1,0] & [1,0] & [0,0] & [0,0] \\ 
4999 & (5,5) & [1,0] & [1,0] & [0,0] & [0,0] \\ 
5099 & (5,5) & [8,0,0,0] & [8,0,0,0] & [0,0,0,0] & [0,0,0,0] \\ 
5399 & (5,5) & [1,0] & [1,0] & [0,0] & [0,0] \\ 
5749 & (5,5) & [1,0] & [1,0] & [0,0] & [0,0] \\ 
6199 & (5,5) & [0,4,0,0,0,0,0,0] & [0,4,0,0,0,0,0,0] & [0,0,0,0,0,0,0,0] & [0,0,0,0,0,0,0,0] \\ 
6299 & (5,5) & [1,0] & [1,0] & [0,0] & [0,0] \\ 
6599 & (5,5) & [1,0] & [1,0] & [0,0] & [0,0] \\ 
\hline 
\end{tabular}

\end{document}